\documentclass{amsart}
\usepackage{graphicx} 
\usepackage{latexsym, enumerate}
\usepackage{amssymb}
\usepackage{amsmath}
\usepackage{amsthm, url}
\usepackage{amscd}
\theoremstyle{plain}
\newtheorem{thm}{Theorem}
\newtheorem{lem}{Lemma}
\newtheorem{defin}{Definition}
\newtheorem{prop}{Proposition}
\usepackage{tikz, tikz-cd}

\usepackage{cite}
\usepackage{color}
\usepackage{xcolor}
\usepackage[pagebackref,colorlinks,citecolor=blue,linkcolor=red]{hyperref}
  \newcommand{\Rel}{\mathrm{Rel}}
\renewcommand*{\backrefalt}[4]{\ifcase #1 (Not cited).\or (Cited p.~#2).\else (Cited pp.~#2).\fi} 
\usepackage{float}
\usepackage{tikz}
\usepackage{lmodern}
\usetikzlibrary{decorations.pathmorphing}
\newcommand{\diam}{\mathrm{diam}}
\theoremstyle{definition}
\newtheorem{rem}{Remark}
\newtheorem{claim}{Claim}

\newtheorem{ques}{Question}

\newcommand{\Teich}{\mathrm{Teich}}
\newcommand{\PMF}{\mathbb{P}\mathcal{MF}}
\newcommand{\MCG}{\mathcal{MCG}}

\newtheorem{thmi}{Theorem}
\newtheorem{cori}[thmi]{Corollary}

\newcommand{\mathcalC}{\mathcal C}

\newcommand{\calC}{\mathcal C}
\newcommand{\calU}{\mathcal U}
\newcommand{\calV}{\mathcal V}
\newcommand{\calW}{\mathcal W}

\NeedsTeXFormat{LaTeX2e}

\title{Atypical generic directions in Teichm\"uller space}
\author{Matthew Gentry Durham, Chenxi Wu, and Kejia Zhu}


\begin{document}

\begin{abstract}

Motivated by geometrically capturing generic directions in Teichm\"uller space---that is, tracking rays for random walks of the mapping class group---we use work of Chaika--Masur--Wolf and Durham--Zalloum to construct the first examples of a sublinearly-Morse Teichm\"uller geodesic rays with minimal non-uniquely ergodic vertical foliations.
\end{abstract}





\maketitle

\section{Introduction}

There is a delicate interplay between the geometry of the Teichm\"uller metric on the Teichm\"uller space, $\Teich(S)$, of a finite-type surface $S$ and its Thurston compactification, $\PMF(S)$, by projectivized measured foliations.  The crux of the complexity is that Teichm\"uller geodesics are defined by deformations of singular flat structures on $S$, while $\PMF(S)$ is defined via hyperbolic geometry.  This leads to an incongruity in how the internal geometry of $\Teich(S)$ is asymptotically encoded in $\PMF(S)$.  Notably, Lenzhen \cite{lenzhen2008teichmuller} proved that geodesic rays can have limit sets in $\PMF(S)$ larger than a point (see also \cite{chaika2019limits, leininger2018limit, brock2020limit}).

In this article, we produce a concrete example highlighting how this tension manifests in the context of random walks of the mapping class group $\MCG(S)$ on $\Teich(S)$.  For our purposes, the main question here is the following:
\begin{ques}
    Can \emph{generic directions} in $\Teich(S)$---namely geodesic rays tracked by sample paths of random walks---be completely characterized via properties of the internal geometry of $\Teich(S)$?
\end{ques}

In their breakthrough paper, Kaimanovich--Masur \cite{kaimanovich1996poisson} proved that Thurston's compactification of Teichm\"uller space is a topological model for the Poisson boundary for many random walks on the mapping class group, with sample paths tracking Teichm\"uller geodesic rays with \emph{uniquely ergodic} vertical foliations.  One upshot here is that the limit set in $\PMF(S)$ of a generic direction in $\Teich(S)$ is a unique point, hence avoiding the above tension between $\Teich(S)$ and $\PMF(S)$.

More recently,  Gekhtman--Qing--Rafi \cite{gekhtman2022genericity} proved that these tracking rays are \emph{sublinearly Morse} \cite{qing2022sublinearly, qing2024sublinearly}, a weak hyperbolicity condition which is controlled by the internal geometry of $\Teich(S)$ (see Durham--Zalloum \cite{durham2022geometry}).  In \cite{gekhtman2022genericity}, the authors proved that the set of accumulation points in $\PMF(S)$ of all sublinearly Morse Teichm\"uller geodesic rays has full measure with respect to any stationary measure associated to a (sufficiently nice) random walk of $\MCG(S)$ on $\Teich(S)$.  In other words, generic directions in $\Teich(S)$ are sublinearly Morse, and almost every sublinearly Morse geodesic is generic.

\smallskip


\begin{thmi}\label{thm:main}
    There exist Teichm\"uller geodesic rays which are sublinearly Morse but have minimal non-uniquely ergodic vertical foliations.
\end{thmi}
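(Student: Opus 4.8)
The plan is to produce the ray by combining the Chaika--Masur--Wolf construction of minimal, non-uniquely ergodic foliations with the Durham--Zalloum combinatorial characterization of sublinear Morseness. The guiding principle is that these two features pull in opposite directions --- non-unique ergodicity forces the vertical foliation to carry a lot of combinatorial complexity in a fixed ``direction'', whereas sublinear Morseness limits the rate at which that complexity may accumulate along the ray --- so the construction has to be tuned so that the complexity builds up slowly relative to the progress of the ray.

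First, I would recall the Chaika--Masur--Wolf picture. A minimal, non-uniquely ergodic measured foliation $\nu$ can be realized as a limit of a sequence of simple closed curves $\alpha_i$ obtained by iterated partial Dehn twists with twisting amounts $n_i$, where non-unique ergodicity is governed by a summability condition of the shape $\sum_i 1/n_i < \infty$ (so the $n_i$ must grow, but only barely, and it is advantageous to keep this growth as slow as possible). For the Teichm\"uller geodesic ray $r$ from a basepoint $X_0$ in the direction of $\nu$, Rafi's thick--thin description then yields: each $\alpha_i$, and each proper subsurface it bounds, becomes short along $r$ throughout an ``active interval'' of length comparable to $\log n_i$; the corresponding subsurface projection coefficient $d_{\alpha_i}(X_0,\nu)$ is comparable to $n_i$; and every other subsurface projection coefficient of $r$ stays uniformly bounded.

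Second, I would invoke the Durham--Zalloum characterization. Using the hierarchically hyperbolic structure of Teichm\"uller space, a geodesic ray $r$ based at $X_0$ is $\kappa$-sublinearly Morse, for a sublinear gauge $\kappa$, if and only if it has ``$\kappa$-bounded projections'': for every subsurface $W$, the diameter of the subsurface projection $\pi_W(r)$ is at most $\kappa$ evaluated at the $r$-distance from $X_0$ to the point where $W$ becomes active along $r$. For the ray $r$ above the uniformly bounded coefficients are harmless, so this collapses to the family of inequalities $d_{\alpha_i}(X_0,\nu) \lesssim \kappa(t_i)$, where $t_i$ is the time at which $\alpha_i$ is shortest along $r$.

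The \textbf{main obstacle} is to arrange these inequalities and the summability condition simultaneously. With the Chaika--Masur--Wolf construction taken off the shelf, consecutive curves $\alpha_i$ stay a bounded distance apart in the curve graph, so $t_i$ is comparable only to $\sum_{k<i}\log n_k$; then $d_{\alpha_i}(X_0,\nu)/t_i \to \infty$, and $r$ is not sublinearly Morse. The remedy is to spread the curves out in the curve graph, for instance by interleaving a large power $L_i$ of a fixed pseudo-Anosov (with $L_i$ chosen to be a suitable power of $n_i$) between consecutive stages of the construction. This leaves the active intervals and the coefficients $d_{\alpha_i}(X_0,\nu) \asymp n_i$ unchanged up to the relevant comparisons, but forces $t_i \asymp \sum_{k<i} L_k$, so that $d_{\alpha_i}(X_0,\nu) = o(t_i)$ and a genuinely sublinear gauge $\kappa$ can be read off. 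The delicate point --- the heart of the argument --- is to check that this interleaving does not restore unique ergodicity: one must verify that the pseudo-Anosov mixing at each stage is mild enough that the transverse measure on $\nu$ still splits into at least two ergodic pieces, which is exactly where the flexibility built into the Chaika--Masur--Wolf mechanism is used. Once the growth estimates are in hand, the rest is bookkeeping: checking that the auxiliary coefficients (annular twisting, and subsurfaces straddling several stages) obey the same sublinear envelope, and matching the Durham--Zalloum notions of gate and ``time of activation'' with the features of $r$ extracted from Rafi's thick--thin decomposition, so that their criterion applies verbatim.
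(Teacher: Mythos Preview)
Your proposal rests on a misdiagnosis of the Chaika--Masur--Wolf construction, and the ``fix'' you propose is both unnecessary and unsubstantiated.

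First, the claimed obstacle is not there. In the slit-torus construction, the slit curves $\zeta_k$ do \emph{not} stay a bounded distance apart in $\calC(S)$: already for the continued fraction $\alpha=[1,4,9,16,\dots]$, one checks that $\zeta_{k-3}$ and $\zeta_{k+3}$ fill $S$ for large $k$, and a passing-up argument (in the sense of \cite{durham2023cubulating}) upgrades this to uniform linear progress of the $\zeta_k$ in $\calC(S)$. So no pseudo-Anosov interleaving is needed to spread the curves out. Second, your projection estimate $d_{\alpha_i}(X_0,\nu)\asymp n_i$ is the wrong quantity for the Teichm\"uller metric. The $\kappa$-bounded projections criterion is verified not by bounding individual twist numbers but by trapping the active interval $I_Y$ of any proper subsurface $Y$ inside at most two consecutive windows $[t_{k-1},t_{k+1}]$ (this is where the curve-graph progress is used), and then invoking the Teichm\"uller distance formula \cite{rafi2007combinatorial,durham2016augmented}:
\[
d_Y(\gamma(0),\gamma(t)) \ \prec\ d_{\Teich(S)}(\gamma(t_{k-1}),\gamma(t_{k+1})) \ =\ |t_{k+1}-t_{k-1}| \ \prec\ \log k \ \prec\ \log t_k,
\]
with annular terms interpreted via the combinatorial horoball. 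With $t_k=\log q_{2k+1}$ one has $|t_{k+1}-t_k|=O(\log k)$ and $t_k>k$, so $\gamma$ has $\log$-bounded projections and is $\log^{2p}$-Morse by \cite[Theorem~K]{durham2022geometry}. Non-unique ergodicity comes for free from $\sum k^{-2}<\infty$ via Veech's criterion.

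Third, even granting your framing, the step you flag as ``the heart of the argument'' --- that interleaving large pseudo-Anosov powers preserves non-unique ergodicity --- is asserted rather than argued. The CMW summability criterion is tied to the specific slit-torus arithmetic and does not obviously survive composing with a pseudo-Anosov at each stage; you would have to redo the ergodic analysis from scratch. Likewise, the assertion that ``every other subsurface projection coefficient of $r$ stays uniformly bounded'' is exactly the sort of statement that requires work (and is what the paper's active-interval/passing-up argument establishes). In short: drop the interleaving, take the CMW ray with $a_i=i^2$ as-is, and replace the coefficient-counting heuristic with the active-interval plus distance-formula bound.
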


Our examples are on the genus two surface, but they can be lifted to higher genus examples by taking covers.  We note that \cite{kaimanovich1996poisson} proved that tracking rays are recurrent to the thick part of $\Teich(S)$, while Masur's criterion \cite{masur1982interval} implies that rays with non-uniquely ergodic vertical foliation are not recurrent.  Hence the examples Theorem \ref{thm:main} exhibit multiple non-generic pathologies.

\smallskip
The following is an interesting consequence of Theorem \ref{thm:main}:

\begin{cori} \label{cor:diverge}
    There exist divergent Teichm\"uller geodesic rays with the same underlying topological vertical foliations which diverge at a sublinear rate.  
\end{cori}

\begin{proof}
    In our examples, the simplex of measures supported by the (non-uniquely ergodic) vertical foliation is an interval.  By Remark \ref{cneq0}, we can we can construct two distinct rays $\gamma,\gamma'$ each limiting to different endpoints in $\PMF(S)$, and hence forcing $\gamma,\gamma'$ to diverge.  These rays, however, are in the same sublinearly Morse class by \cite[Theorem A, part (1)]{durham2022geometry}, which states that the sublinearly Morse boundary of Teichm\"uller space admits an injection into the Gromov boundary of the curve graph.  Hence $\gamma,\gamma'$ diverge at a sublinear rate.
\end{proof}

Our construction utilizes the robust machinery of Chaika--Masur--Wolf \cite{chaika2019limits} for producing Teichm\"uller geodesic rays with minimal non-uniquely ergodic vertical foliations.  We verify sublinear Morseness for our examples via work of Durham--Zalloum \cite[Theorem F]{durham2022geometry}, which provides a criterion for sublinear Morseness for Teichm\"uller geodesic rays.  To verify the criterion, we utilize hierarchical techniques from \cite{rafi2005characterization, rafi2014hyperbolicity, durham2023cubulating} to analyze how quickly the shadows of our geodesic rays diverge in the curve graph $\calC(S)$ of $S$.  In particular, we show that certain examples from \cite{chaika2019limits} achieve a balance of diverging quickly enough in $\calC(S)$ to be sublinearly Morse but not so quickly that they are uniquely ergodic.  See \cite{cheung2007slow, trevino2014ergodicity, chaika2017logarithmic} for the connection between divergence rates and ergodicity.

Theorem \ref{thm:main} is optimal in the following sense.  Sublinear Morseness for a given ray is a property controlled by some sublinear function $\kappa$.  The sublinear function in the tracking result from \cite{gekhtman2022genericity} is obtained via a non-constructive argument.  However, in \cite{qing2024sublinearly}, Qing--Rafi--Tiozzo show that sample paths of random walks of $\MCG(S)$ on itself track $\log^p$-Morse geodesics, for $p=p(S)$ controlled by the topology of the surface $S$. The sublinear function in our example is also a controlled power of $\log$, with the power arising from \cite{durham2022geometry} via a refined version (from \cite{durham2023cubulating}) of the hierarchical ``passing-up'' arguments utilized in \cite{qing2024sublinearly}.  Thus we expect that one cannot skirt around the examples in Theorem \ref{thm:main} by changing the sublinear function.

We end with an interesting consequence of Corollary \ref{cor:diverge}.  There is a natural way to associate any Teichm\"uller geodesic ray with its limit set in $\PMF(S)$.  Notably, Cordes \cite{cordes2017morse} proved that this map is a well-defined injection when restricted to the set of Morse geodesic rays.  That is, every Morse geodesic ray determines a unique limit point in $\PMF(S)$.  On the other hand, by Remark \ref{cneq0} and \cite[Theorem 2.7]{chaika2019limits}, the limit set in $\PMF(S)$ of the Teichm\"uller geodesic rays in our examples can be made to be more than a point, including possibly an interval.

Hence, even though each sublinearly Morse Teichm\"uller geodesic determines a unique point in the Gromov boundary of the curve graph by \cite[Theorem A]{durham2022geometry}, there is no such injection into $\PMF(S)$ as in \cite{cordes2017morse}.




\section{Informal discussion of \cite{chaika2019limits}}

We begin with an informal description of what is happening in the construction of Chaika--Masur--Wolf \cite{chaika2019limits}, which involves in part a detailed analysis of the slit torus examples due to Veech \cite{veech1969strict}.

First, take one copy of the $2$-torus and cut a slit in it.  Then lift it to a double cover with two branch points being the endpoints of the slit.  This cover is a genus-$2$ surface, $S$.  The idea now is to put a particular flat structure on each torus and then deform it.  This deformation will lift under the cover to a geodesic in the Teichm\"uller space for $S$, but it will really live in the subspace corresponding to the Teichm\"uller space of the twice-punctured torus.

Picking any flat structure on the torus and cutting the slit determines a point in $\Teich(S)$, as well as a quadratic differential.  So we want to pick a particular flat structure so that the ``vertical'' direction is skewed.  This is the meaning of the column vectors in the matrix
$$\begin{pmatrix}1&-\alpha\\0&1\end{pmatrix}.$$

This choice of skewed vertical direction determines a lattice in the plane $\mathbb{R}^2$.  The idea then is to choose a pair of points in the lattice that will correspond to the next vertical and horizontal directions that we want.  Flowing along the Teichm\"uller geodesic flow, we can arrange that the lengths of these new vertical and horizontal lines become comparable to $1$, while the original slit becomes very long.

Now we have a very long slit on the torus, with one endpoint at the corner point (all four corners are identified), while the other point is somewhere in the interior.  The last claim follows from arranging the original vertical direction to be skewed at an irrational angle, and also the flow time being irrational, so that the other endpoint is not at the corner.  This is the point of Proposition \ref{Prop1}.

With this setup, we can draw an arc from the interior endpoint to the corner, whose length is bounded by the area of the torus.  This new slit now lifts to a curve which has bounded length in the corresponding metric on $S$.  We repeat this process, flowing to the next pair of vertices, so that the second slit now has a very long length with one endpoint at the corner and the other in the interior.

Through the work of \cite{chaika2019limits}, we gain explicit control over the slit curves appearing through this process from the information encoded in the continued fraction expansion of $\alpha$.  However, we will only need the output of their analysis.

\section{Our Teichm\"uller geodesic and its slit curves}\label{sec:Teich}

In this section, we extract some basic information about the slit curves arising in the construction from \cite{chaika2019limits}.  We begin by setting up some notation.

\subsection{Setup and notation}

In this subsection, we will introduce some of the key players in the paper.  We point the reader toward \cite{wright2015translation} and \cite{farb2011primer} for background on translation surfaces and Teichm\"uller theory, respectively.

Let $\alpha:=[1,4,9,16,25,\dots,k^2, \dots]$ denote the real number with given continued fraction expansion, here by $\alpha=[a_1, a_2, a_3, \dots]$ we mean
\[\alpha=\frac{1}{a_1+\frac{1}{a_2+\frac{1}{a_3+\frac{1}{\dots}}}}\]
and let $\frac{p_k}{q_k}$ denote the $k^{th}$ convergent for $\alpha$ from that expansion, i.e. 
\[\frac{p_k}{q_k}=[1, 4, 9, \dots, k^2]=\frac{1}{1+\frac{1}{4+\frac{1}{9+\frac{1}{\dots+\frac{1}{k^2}}}}}\]

\begin{rem}\label{lem:rot est}
    It follows from the definition that $q_{k+1}=(k+1)^2q_k+q_{k-1}$.
\end{rem}

Following the notation of \cite[Section 2]{chaika2019limits}, consider a subsequence $\{\frac{p_{n_k}}{q_{n_k}}\}$ defined by setting $n_k=2k+1$. Now as in \cite[Section 2.1]{chaika2019limits}, let 
\[b=2\sum_{k=1}^\infty \left(q_{n_k}\alpha-\lfloor q_{n_k}\alpha \rfloor\right)=2\sum_{k=1}^\infty\left(q_{2k+1}\alpha-p_{2k+1}\right)\]
Let $T$ denote the horizontally and vertically oriented square torus and 
\[Y=\begin{pmatrix}1&-\alpha\\0&1\end{pmatrix}T\]
We glue two isometric, identically oriented copies of $Y$ together along a slit with holonomy $(b,0)$. Denote this flat surface by $X$. Let 
\[g_t=\begin{pmatrix}e^t&0\\0&e^{-t}\end{pmatrix}\]
then the action of $g_t$ on $X$ induces a Teichm\"uller geodesic $\gamma$. Observe that the first return of the vertical flow to the horizontal base is a rotation by $\alpha$.

At the beginning of Section 2 of \cite{chaika2019limits}, the authors introduced three assumptions on the number $\alpha$, which we will now verify

\begin{lem}\label{assumpsabc}
The following hold:

\begin{enumerate}[(A)]
    \item $\sum_{k=1}^\infty (a_{{n_k}+1})^{-1}<\infty$: 
    \item $a_{n_k}\to\infty$: 
    \item $q_{n_{k-1}}\log a_{n_k+1}=o(q_{n_k})=o(a_{n_k}q_{n_k-1}+q_{n_k-2})$. 
\end{enumerate}    
\end{lem}

\begin{proof}
    For item (A), observe that
    \[\sum_{k=1}^\infty (a_{{n_k}+1})^{-1}=\sum_{k=2}^\infty (2k)^{-2}<\infty.\]

    For item (B), observe that $a_{n_k}=(2k+1)^2$ which converges to $\infty$ as $k\to\infty$.

    Finally, for item (C), Remark \ref{lem:rot est} provides that
    \[\frac{q_{n_{k-1}}\log a_{n_k+1}}{q_{n_k}}=\frac{q_{2k-1}\log(a_{2k+2})}{q_{2k+1}}=O\left(\frac{\log(k)}{k^4}\right)=o(1).\]
\end{proof}

Recall that a measure preserving flow on a space $Y$ with probability measure $m$ is said to be \emph{ergodic} if $Y$ cannot be written as the disjoint union of two subsets that are invariant under the flow and each of positive measure. The flow is \emph{uniquely ergodic} if $m$ is the unique invariant measure for the flow. 

\begin{lem}\label{LemmaNUE}
   The Teichm\"uller geodesic $\gamma$ associated with $X$ is non-uniquely ergodic.
\end{lem}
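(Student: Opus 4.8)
The plan is to show that the slit we glue along, which has holonomy $(c,0)$ with $c = \sum_{j=1}^\infty 2(p_{2j}-q_{2j}\alpha)$, defines a curve that is \emph{not} uniquely ergodic as a measured foliation, hence the vertical foliation of $X$ cannot be uniquely ergodic either. The key point is that the double cover $X \to Y$ branched over the endpoints of the slit carries a $\mathbb{Z}/2$ deck action, and the vertical foliation of $X$ decomposes according to the $\pm 1$ eigenspaces of this involution acting on the transverse measures. Concretely, the slit direction on each copy of $Y$ is horizontal, and by the choice of $\alpha$ the continued-fraction data forces the return map of the vertical flow to be a rotation whose orbit closure structure on the two sheets can support at least two distinct ergodic invariant measures: one ``symmetric'' measure descending from the unique ergodic measure on $Y$, and an ``antisymmetric'' one. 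First I would invoke the general mechanism from \cite{chaika2019limits} (and the classical Veech slit-torus picture described in Section 2) that the sum $\sum_j 2(p_{2j}-q_{2j}\alpha)$ is chosen precisely so that the slit curve has well-controlled length along $\gamma$ while the vertical foliation becomes minimal and splits measures.

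More carefully, the strategy is: (1) identify the vertical foliation $\nu$ of $X$ as the lift of the vertical foliation of the (uniquely ergodic) linear flow on $Y$ in direction given by the first column of $\begin{pmatrix}1&-\alpha\\0&1\end{pmatrix}$; the base foliation is uniquely ergodic because $\alpha$ is irrational (indeed badly approximable-ish, though here only irrationality is needed for this step). (2) Observe that $\nu$ is minimal on $X$ — this follows because the slit's holonomy $(c,0)$ is horizontal, so it is transverse to $\nu$ and does not create a closed leaf or a component decomposition; minimality of $\nu$ is exactly what Proposition \ref{Prop1} and the irrationality arrangement guarantee, namely the far endpoint of each long slit lands in the interior, never at the cone point. (3) Show $\nu$ is \emph{not} uniquely ergodic by exhibiting two distinct invariant transverse measures. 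The cleanest way: the $\mathbb{Z}/2$ cover gives an exact sequence on transverse-measure spaces, and one checks that the lift of Lebesgue from $Y$ (the symmetric measure) together with a nonzero antisymmetric cocycle — whose existence is controlled by the divergence of $\sum_j 2(p_{2j}-q_{2j}\alpha)$, i.e. this series converging but the partial Rauzy–Veech / continued-fraction cocycle not being summable in the relevant sense — span a $\geq 2$-dimensional cone of invariant measures. This is the standard criterion (see \cite{cheung2007slow, trevino2014ergodicity}): the relevant Birkhoff sums of the cocycle over the approximating closed curves $q_k$ do not decay, because $|p_k - q_k\alpha| \sim 1/q_{k+1}$ and $q_{k+1} = (k+1)^2 q_k + q_{k-1}$ grows only factorially-ish, slowly enough that $\sum q_k |p_k - q_k \alpha|$ diverges, forcing non-unique-ergodicity of the lifted foliation.

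The main obstacle will be step (3): rigorously producing the second ergodic measure rather than merely citing that ``slit constructions are non-uniquely ergodic.'' The technical heart is to relate the abstract convergence/divergence condition on $\sum_j 2(p_{2j}-q_{2j}\alpha)$ to the Masur-type criterion for non-unique ergodicity of the vertical foliation, i.e. to show that the $g_t$-trajectory of $X$ leaves every compact set of the moduli space of quadratic differentials (equivalently, that the systole along $\gamma$ goes to zero), because for minimal non-uniquely ergodic foliations one has the Masur criterion \cite{masur1982interval} — recurrence to the thick part forces unique ergodicity, so non-recurrence is the thing to verify. In practice, though, the honest route is simply to quote \cite{chaika2019limits}: their analysis produces, from a continued fraction expansion growing like $k^2$, a sequence of slit curves $\beta_k$ on $X$ whose $X$-lengths stay bounded (or grow sublinearly) while their hyperbolic lengths on the base surface force the vertical foliation to be minimal and to carry a $2$-dimensional space of transverse measures. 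So the proof will be short: set up the cover and the foliation, note minimality from the interior-endpoint arrangement (Proposition \ref{Prop1}), and then invoke the Chaika–Masur–Wolf machinery with the specific parameter $\alpha = [1,4,9,16,\dots]$ to conclude non-unique ergodicity, with the slit-length control (which is the real content we will need later for sublinear Morseness) recorded along the way.
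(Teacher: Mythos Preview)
Your final sentence --- ``quote \cite{chaika2019limits}'' --- is exactly what the paper does, and is all that is needed: \cite[Theorem~2.3]{chaika2019limits} (originally Veech) says that if $\alpha=[a_i]$ admits a subsequence with $\sum_k a_{n_k+1}^{-1}<\infty$, then the vertical foliation of the slit-torus surface is non-uniquely ergodic; here $\sum_k a_k^{-1}=\sum_k k^{-2}=\pi^2/6<\infty$, done.  The elaborate strategy you outline before that, however, contains two genuine errors that would derail an attempt to carry it out.

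First, you assert that $\sum_k q_k|p_k-q_k\alpha|$ \emph{diverges} and that this divergence forces non-unique ergodicity.  Both halves are wrong.  Since $|p_k-q_k\alpha|\asymp 1/q_{k+1}$ and $q_{k+1}=a_{k+1}q_k+q_{k-1}$, one has $q_k|p_k-q_k\alpha|\asymp q_k/q_{k+1}\asymp 1/a_{k+1}=1/(k+1)^2$, so the series \emph{converges}; and it is precisely this convergence that is Veech's hypothesis for non-unique ergodicity in the slit-torus construction.  When the partial quotients are bounded (so the analogous sum diverges), the lifted foliation is uniquely ergodic.  Second, your proposed use of Masur's criterion is the wrong direction: Masur gives recurrence $\Rightarrow$ unique ergodicity, hence non-UE $\Rightarrow$ non-recurrence.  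Verifying non-recurrence does \emph{not} yield non-unique ergodicity --- there exist divergent Teichm\"uller rays with uniquely ergodic vertical foliation --- so ``non-recurrence is the thing to verify'' is a logical misstep.  Strip all of this out and keep only the one-line citation plus the check that $\sum k^{-2}<\infty$.
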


\begin{proof}
This is an immediate consequence of \cite[Theorem 2.3]{chaika2019limits}, which was originally proved by Veech in \cite{veech1969strict}, which says that if $\alpha$ satisfies Assumption (A) in Lemma \ref{assumpsabc}, then the vertical flow as an interval exchange map is non-uniquely ergodic. And Assumption (A) has been verified in Lemma \ref{assumpsabc}.
\end{proof}



\subsection{The geodesic ray and its slit curves}

In this subsection, we begin our analysis of the geodesic ray $\gamma$.  Roughly speaking, we need a sequence of times $\{t_k\}$, so that the gap between $t_k, t_{k+1}$ grows logarithmically and so that at each time $t_k$ there is an increasingly short \emph{slit curves} $\zeta_k$, in the complement of each of which there are no short curves.  In Section \ref{sec:curve graph}, we show that this sequence of slit curves diverges in the curve graph $\mathcalC(S)$ at a linear rate (in the index of the sequence).

\medskip
Our first Proposition \ref{Prop1} provides the sequence of times corresponding to the slit curves on the geodesic ray we need.  Proposition \ref{propfill} then shows that the slit curves whose indices are sufficiently far apart are filling.  These results setup Proposition \ref{Prop2} in Section \ref{sec:curve graph}, which explains how the slits curves spread out at a uniform rate in the curve graph $\mathcalC(S)$.

\begin{prop}\label{Prop1}
Let $t_k:=\log (q_{2k+1})$,
    \begin{enumerate}
    \item For any fixed $D>0$, there exists a constant $C>0$ so that for every $k$:
    \[|t_k - t_{k+D}| < C\log t_k.\]
    \item The geodesic flow of $X$ at time $t_k$ can be split into two tori $Y^k_{\pm}$, each with a pair of slits as the boundary, and both are uniformly $\epsilon$-thick, where $\epsilon$-thick means systolic lengths $\ge\epsilon$.
    \item The (hyperbolic) lengths of the splitting slits $\zeta_k$ at time $t_k$ goes to $0$ as $k\to \infty$.
\end{enumerate}
\end{prop}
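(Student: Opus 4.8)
The plan is to analyze the three items separately, with the continued-fraction recursion from Remark \ref{lem:rot est} doing most of the work for item (1), and the slit-torus geometry from the informal discussion providing items (2) and (3). For item (1), I would start from the recursion $q_{k+1} = (k+1)^2 q_k + q_{k-1}$, which immediately gives the two-sided estimate $(k+1)^2 q_k \le q_{k+1} \le ((k+1)^2+1) q_k$, and hence, taking logs and telescoping, $\log q_{2k+1} \asymp \sum_{j \le 2k+1} \log(j^2) \asymp k \log k$. Therefore $t_k \asymp k\log k$. With $D$ fixed, $t_{k+D} - t_k = \sum_{j=2k+2}^{2k+2D+1}\log(j^2+O(1)) = O(D\log k)$, and since $\log t_k \asymp \log(k\log k) \asymp \log k$, we get $|t_k - t_{k+D}| = O(D\log k) = O(\log t_k)$, with the implied constant absorbing $D$; this is exactly the claimed $C\log t_k$ bound. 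This step is essentially a bookkeeping exercise and I expect no real difficulty.

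For items (2) and (3), I would invoke the construction recalled in Section 2: the time $t_k = \log q_{2k+1}$ is precisely the Teichm\"uller flow time at which the lattice vector corresponding to the $(2k+1)$st convergent of $\alpha$ has been contracted to unit order, so that on each copy of the torus $Y$ the ``new'' vertical and horizontal sides have length comparable to $1$ while the accumulated slit has become long. Concretely, one checks that $g_{t_k} Y$ is, up to bounded distortion, the unit square torus (equipped with the basis given by the convergent $p_{2k+1}/q_{2k+1}$ together with the complementary lattice vector), hence uniformly thick: the systole of a unimodular torus with shortest vector of length $\asymp 1$ is bounded below by a constant $\epsilon$ independent of $k$. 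Since $X$ is two copies of $Y$ glued along a slit, $g_{t_k} X$ splits along the image of that slit into $Y^k_\pm \cong g_{t_k} Y$, each with a pair of slits forming its boundary, giving (2). The reason I expect (2) to require the most care is the need to verify that \emph{no other} curve is short at time $t_k$ — i.e.\ that thickness of the two torus pieces really does control thickness of the whole slit surface away from the slit itself; here one uses that any essential curve on $X$ either lives in one of the $Y^k_\pm$ (hence is long by unimodular-torus thickness) or crosses the slit, and a curve crossing the slit has flat length bounded below in terms of the torus geometry.

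For item (3), the point is that the slit $\zeta_k$ at time $t_k$ has \emph{flat} length comparable to $1$ (it is a controlled arc from the long-slit interior endpoint back to the corner, of length bounded by the torus area, as in the informal discussion), but it bounds on each side a flat cylinder of definite modulus coming from the fact that the next pair of lattice directions have not yet been brought down to unit scale — equivalently, the extreme length of $\zeta_k$ tends to $0$ because $t_{k+1} - t_k \to \infty$. More precisely, between times $t_k$ and $t_{k+1}$ the curve $\zeta_k$ stays short (its flat length decays exponentially as the geodesic flows past it while $g_t Y$ degenerates toward the next cusp), so its hyperbolic length, which is comparable to $1/\log(1/\mathrm{Ext}(\zeta_k))$ or to the reciprocal of the modulus of its maximal flat annulus, goes to $0$ as $k\to\infty$. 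I would make this quantitative by citing the relation between flat annulus moduli and hyperbolic length (Maskit, or Rafi's comparison in \cite{rafi2005characterization}) together with the explicit growth $t_{k+1}-t_k \asymp \log k \to \infty$ from item (1). The main obstacle throughout is not any single estimate but correctly matching the normalizations in \cite{chaika2019limits} — tracking exactly which lattice vectors play the role of vertical/horizontal at time $t_k$ — so that the thickness and slit-length claims follow cleanly from their analysis rather than being reproved from scratch.
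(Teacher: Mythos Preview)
Your treatment of item (1) is correct and matches the paper's argument essentially line-for-line: both use the recursion $q_{k+1}=(k+1)^2q_k+q_{k-1}$ to bound $t_{k+1}-t_k$ by $O(\log k)$, telescope over $D$ steps, and then compare $\log k$ with $\log t_k$. Your version is in fact slightly sharper, since you observe $t_k\asymp k\log k$ whereas the paper only records $t_k>k$.

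For item (2), the paper does not carry out the direct argument you sketch; it simply invokes \cite[Proposition 4.2]{chaika2019limits}, which already contains the statement that the two torus pieces at the convergent times are uniformly thick. Your outline (unimodular torus with shortest vector $\asymp 1$, plus a case analysis for curves crossing the slit) is the right shape, but it is unnecessary given the citation, and making it rigorous would amount to reproducing a chunk of \cite{chaika2019limits}.

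Item (3) is where your proposal has a genuine gap. You assert that the slit $\zeta_k$ has flat length \emph{comparable to $1$} at time $t_k$, reading the informal discussion's ``bounded by the area of the torus'' as a two-sided estimate. It is not: \cite[Lemma 2.16]{chaika2019limits} gives $|\zeta_k|\asymp 1/a_{n_k+1}=1/(2k+2)^2\to 0$ at time $t_k$. This matters because your mechanism for getting hyperbolic length $\to 0$ is the growth of $t_{k+1}-t_k$, i.e.\ a dynamical argument about what happens \emph{after} time $t_k$. But the statement asks for the hyperbolic length \emph{at} time $t_k$, and on a unit-area surface a curve of flat length $\asymp 1$ cannot bound a flat annulus of modulus $\to\infty$ (area $\le 1$ forces modulus $\lesssim 1$). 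So if the flat length really were $\asymp 1$, the conclusion would be false. The paper's argument is the one you should use: flat length $\to 0$ at time $t_k$ by \cite[Lemma 2.16]{chaika2019limits}, hence the annulus modulus around $\zeta_k$ goes to infinity, hence extremal length $\to 0$, hence hyperbolic length $\to 0$ by Maskit's comparison \cite{maskit1985comparison}. The gap $t_{k+1}-t_k\to\infty$ plays no role here.
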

\begin{proof}
Item (1): Since $t_k=\log (q_{2k+1})$,
\[|t_k - t_{k+1}|=|\log (q_{2k+1}/q_{2k+3})|.\]
By Remark \ref{lem:rot est}, $q_{k+1}=(k+1)^2q_k+q_{k-1}$, so
\[|t_k - t_{k+1}|= \log\left(\frac{(2k+1)^2(2k)^2q_{2k-1}+q_{2k-1}+(2k+1)^2q_{2k-2}}{q_{2k}}\right)=O(\log(k)).\]
Moreover, by
\[|t_k - t_{k+D}|\le |t_k - t_{k+1}|+\dots+|t_{k+D-1} - t_{k+D}|\]
it follows that
\[|t_k - t_{k+D}|=O(\log(k)).\]

To see that $\log k < \log t_k$, recall that $\frac{p_k}{q_k}$ converges to the continued fraction for $\alpha=[1,4,9,16,...]$ (hence each $q_i>0$).  Also recall that $t_k:=\log (q_{2k+1})$, so
\[q_{k}=k^2q_{k-1}+q_{k-2}>k^2q_{k-1}\]
and
\[\frac{q_{k}}{q_{k-1}}>k^2\]
Therefore, $q_k>\prod_{i=1}^k i^2$, and hence
\[t_k>\log (q_{k})>\sum_{=1}^k\log (i^2)>k\]
In particular, $\log(t_k)>\log(k)$.\\

Item (2): This follows immediately from Proposition 4.2 of \cite{chaika2019limits}. 


Item (3): By \cite[Lemma 2.16]{chaika2019limits}, if we set $n_k=2k+1$,
then the flat length of each slit $\zeta_k$ at time $t_k$ satisfies $$|\zeta_k|\asymp \frac{1}{a_{n_k+1}}.$$
(Also see the proof of Theorem 2.7 of \cite[Section 7.1]{chaika2019limits}.)
Therefore, $|\zeta_k|\to 0,$ as $k\to\infty$.
For each slit $\zeta_k$, consider a disk $D_k$ in the torus containing the $\zeta_k$, which is indeed a cylinder. Gluing along the slit of each cylinder gives an annulus. By the Riemann Mapping Theorem, this cylinder is conformal to a flat cylinder $C_k$. 

Now, since the flat length $|\zeta_k|$ of $\zeta_k$ is approaching $0$, the modulus of a cylinder $C_k$, say $m_k$, must be diverging to $\infty$ with $k$. So the extremal length $E(\zeta_k)$---which is, by definition, the reciprocal of the modulus of the biggest cylinder---goes to $0$, as $k$ goes to infinity. Now by Maskit's comparison result \cite{maskit1985comparison}, we have $$\frac{H(\zeta_k)}{\pi}\le E(\zeta_k). $$
Therefore, the hyperbolic length of the slit $\zeta_k$, $H(\zeta_k) $, goes to $0$, as $k$ goes to infinity.
\end{proof}


\begin{figure}[h]
    \centering
\begin{tikzpicture}[scale=1.2]

\draw (-1.6,0) -- (0.4,0);
\draw [->](-1.6,0) -- (-0.8,0);
\draw [->](-2,2) -- (-1,2);
\draw (-2,2) -- (0,2);
\draw (-1.6,0) -- (-2,2);
\draw (0.4,0) -- (0,2);
\draw [->](0.4,0) -- (0.2,1);
\draw [->](-1.6,0) -- (-1.8,1);
\draw (-0.6,1.0) -- (-1.0,1.0);
\draw[fill=none][red](-0.8,1) circle (0.4);
\draw [green][dotted](-1.6,0) -- (-1.6,2);
\draw [green][thick](-2,2) -- (-1.6,2);

\draw (-4.1,0) -- (-2.1,0);
\draw [->](-4.1,0) -- (-3.1,0);
\draw (-4.5,2) -- (-2.5,2);
\draw [->](-4.5,2) -- (-3.5,2);
\draw (-4.1,0) -- (-4.5,2);
\draw (-2.1,0) -- (-2.5,2);
\draw [->](-2.1,0) -- (-2.3,1);
\draw [->](-4.1,0) -- (-4.3,1);
\draw (-3.5,1.0) -- (-3.1,1.0);
 \draw[fill=none][blue](-3.3,1) circle (0.4);

\draw [blue](1.5,1.5) to[out=0,in=-0] (1.5,0.5);
\draw [blue](1.5,1.5) to[out=180,in=-180] (1.5,0.5);

\draw [red](4.5,1.5) to[out=0,in=-0] (4.5,0.5);
\draw [red][dotted](4.5,1.5) to[out=180,in=-180] (4.5,0.5);

\draw (3,1.3) to[out=0,in=-0] (3,0.8);
\draw [dotted](3,1.3) to[out=180,in=-180] (3,0.8);

\draw (1.5,1.5) to[out=0,in=180] (3,1.3);
\draw (1.5,0.5) to[out=-0,in=-180] (3,0.8);
\draw (3,0.8) to[out=-0,in=-180] (4.5,0.5);
\draw (3,1.3) to[out=0,in=180] (4.5,1.5);



\end{tikzpicture}
\caption{On each torus, find a disk containing the slit; gluing the two disks containing the slit along the slits gives the cylinder. The length of the green segment is the shear of the torus, $\alpha$.}
\label{fig:cylinder}

\end{figure}

\begin{rem}\label{cneq0}
    By Lemma \ref{LemmaNUE}, the vertical flow of the flat surface $X$ defined above is non uniquely ergodic. The flat structure of $X$ corresponds to the average of the two ergodic measures ($c=0$ in the language of \cite{chaika2019limits}). If we replace the flat structure with one which corresponds to a weighted sum of the two ergodic measures, where weights are non-equal but both positive ($c\in (-1, 1)$ and $c\not=0$ in the notation of \cite{chaika2019limits}), then for this new flat surface $X_c$, the conclusions of Proposition \ref{Prop1} are still valid. Because the proof of sublinearly Morseness in the later sections only depends on Proposition \ref{Prop1}, we can pick the Teichmuller geodesic whose limit is an interval in $\PMF$, and the interval depends on the parameter $c$ (See \cite[Theorem 2.7]{chaika2019limits}, which requires that $\alpha$ satisfies Assumptions (A), (B) and (C) in Lemma\ref{assumpsabc}). In particular:
    \begin{itemize}
        \item Item (1) only involves the definition of $t_k$ and is unrelated to the surface itself.
        \item Item (2) can be shown by replacing Proposition 4.2 with Proposition 4.5 of \cite{chaika2019limits} in the proof of Proposition \ref{Prop1}.
        \item Item (3) is due to the fact that the horizontal component of the slit for $X_0=X$ is the average of the horizontal component of the slit for $X_c$ and for $X_{-c}$, hence the latter is no more than twice that of the slit of $X_0$. The vertical components of these slits are all the same. Now apply the same argument as in Proposition \ref{Prop1} to get an estimate of the hyperbolic length. Also see the proof of Theorem 2.7 in Section 7 of \cite{chaika2019limits}.
    \end{itemize}
\end{rem}

To set ourselves up for our combinatorial arguments in Section \ref{sec:curve graph}, we need another proposition.

\begin{prop}\label{propfill}
    When $k$ is sufficiently large, $\zeta_{k+3}$ and $\zeta_{k-3}$ fill the surface $X$.
\end{prop}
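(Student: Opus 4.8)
The plan is to show that $\zeta_{k+3}$ and $\zeta_{k-3}$ fill by arguing that no simple closed curve on $X$ can be disjoint from both of them. First I would recall what the complement of a single slit curve $\zeta_j$ looks like: by Proposition \ref{Prop1}(2), cutting $X$ along $\zeta_j$ (together with its partner slit, since the pair forms the splitting boundary) decomposes the surface into the two $\epsilon$-thick tori $Y^j_\pm$, each carrying a flat structure obtained by applying $g_{t_j}$ to the Veech construction. So any curve $c$ disjoint from $\zeta_j$ is, up to isotopy, contained in one of the two tori $Y^j_\pm$, and on that torus it is (isotopic to) one of the closed leaves of the flat structure — equivalently it is determined by a slope in the lattice defining $Y$. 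The key quantitative input is that the relevant ``non-curled'' directions at time $t_j$ are controlled by the continued fraction data: the short curves at time $t_j$ correspond to the convergent $p_{2j+1}/q_{2j+1}$ (or a neighboring one), and a curve disjoint from $\zeta_j$ that is not itself short must have flat length bounded below, uniformly in the thick part, by some fixed function of the geometry.

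The core of the argument is then a separation-of-scales statement: a simple closed curve $c$ that is simultaneously disjoint from $\zeta_{k+3}$ and from $\zeta_{k-3}$ would have to be a bounded-length curve in \emph{both} of the flat structures $g_{t_{k-3}}X$ and $g_{t_{k+3}}X$. But the Teichm\"uller flow between times $t_{k-3}$ and $t_{k+3}$ has real length $t_{k+3}-t_{k-3} = \log(q_{2k+7}/q_{2k-5})$, which by Remark \ref{lem:rot est} grows like $\log$ of a product of six terms $a_i = i^2$ — in particular it tends to infinity with $k$. Under $g_t$-flow of definite length $L$, a curve that is $\delta$-short at the initial time gets stretched (in its horizontal component) by a factor comparable to $e^L$, so it cannot remain bounded at the later time unless its flat length at the later time is at least comparable to $e^{L}\cdot(\text{something})$, which forces it off the thick part and in particular makes it longer than any curve that could be disjoint from $\zeta_{k+3}$. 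Concretely, I would use the explicit length estimate $|\zeta_j| \asymp 1/a_{n_j+1}$ from \cite[Lemma 2.16]{chaika2019limits} together with the lattice description of the curves in $Y^j_\pm$ to show: the set of slopes giving curves of flat length $\le R$ at time $t_{k-3}$ and the corresponding set at time $t_{k+3}$ are disjoint once $k$ is large, because the continued-fraction convergents controlling the two scales are separated by at least three ``steps'' (hence the gap of $\pm 3$ in the indices), and a gap of $3$ is more than enough for the Veech/interval-exchange combinatorics of \cite{chaika2019limits} to guarantee the corresponding subsurfaces are transverse.

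I expect the main obstacle to be making precise the claim that ``disjoint from $\zeta_j$'' forces a curve into one of $Y^j_\pm$ in a way that is \emph{robust as $j$ varies}, and then controlling how the admissible curves in $Y^{k-3}_\pm$ sit relative to those in $Y^{k+3}_\pm$. The cleanest route is probably to phrase everything in the curve graph $\calC(S)$: disjointness means the curves are within distance $1$, so it suffices to show $d_{\calC(S)}(\zeta_{k+3},\zeta_{k-3}) \ge 3$ for $k$ large, which is exactly the kind of estimate the next subsection (Proposition \ref{Prop2}, and the linear-divergence statement in Section \ref{sec:curve graph}) is designed to produce. So in practice I would prove Proposition \ref{propfill} as a corollary of the combinatorial setup: one shows each $\zeta_j$ has a definite ``footprint'' in $\calC(S)$ — namely the short curves at time $t_j$, organized via the subsurface projections of \cite{rafi2005characterization} — and that consecutive footprints are uniformly close while footprints six apart are uniformly far, using the logarithmic time gaps from Proposition \ref{Prop1}(1) to bound projections and the growth $a_i = i^2 \to \infty$ to get the lower bound. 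The genuinely delicate point is ruling out a curve that ``hides'' in an annular neighborhood — i.e. controlling the annular subsurface projection to the $\zeta_j$'s themselves — but since the $\zeta_j$ are the systole-realizing curves in a thick splitting, their annular projections are uniformly bounded, so this does not cause trouble.
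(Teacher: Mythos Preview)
Your proposal has a genuine circularity problem. You suggest deriving Proposition~\ref{propfill} ``as a corollary of the combinatorial setup'' of Proposition~\ref{Prop2}, but the proof of Proposition~\ref{Prop2} in the paper \emph{opens} with ``By Proposition~\ref{propfill}, we may assume that the slit curves $\sigma_i$ pairwise fill $S$.'' The filling property is logically prior to the curve-graph divergence, not a consequence of it; the passing-up machinery of Claim~\ref{claim:passing-up} needs pairwise filling to rule out slit curves lying in $\partial W$ or disjoint from $W$.

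Your earlier ``separation of scales'' sketch also has a gap: being disjoint from $\zeta_j$ does \emph{not} force a curve to have bounded flat length at time $t_j$. A simple closed curve sitting in one of the tori $Y^j_\pm$ can have arbitrarily large slope and hence arbitrarily large length while still missing the slit. So the claim that a curve disjoint from both $\zeta_{k-3}$ and $\zeta_{k+3}$ ``would have to be a bounded-length curve in both of the flat structures'' is unjustified, and the $e^L$-stretching argument does not get off the ground.

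The paper's argument is different and more elementary: it works entirely in the flat picture at the single intermediate time $t_k$. Using the explicit holonomy estimates from \cite[Lemma~2.16]{chaika2019limits}, one shows that at time $t_k$ the slit $\zeta_{k+3}$ already has vertical length $\gg 1$ (so it winds through both tori many times and every point has horizontal distance $O(1/k^4)$ to it), while symmetrically $\zeta_{k-3}$ has horizontal length $\gg 1$ (so every point has vertical distance $O(1/k^4)$ to it). Away from a small neighborhood of the singular slit, the two curves therefore chop each torus into a grid of rectangles of size $O(1/k^4)\times O(1/k^4)$. Since the slits are flat geodesics on a nonpositively curved surface they are in minimal position, and no essential loop can thread through such a fine grid without meeting one of them. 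This avoids any appeal to $\calC(S)$ and supplies the filling statement needed as input to Section~\ref{sec:curve graph}.
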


\begin{proof}
    As before, denote the geodesic ray by $\gamma$. By \cite[Lemma 2.16]{chaika2019limits}, when $k$ is large enough, at time $t_{k+3}$, the horizontal length of the slit $\zeta_{k+3}$
    \[h(\zeta_{k+3})\simeq 1/a_{n_{k+3}+1}=1/a_{2k+8}\]
    (where $\simeq$ means bounded both above and below up to a multiplicative constant depending only on the surface), 
    while the vertical length of the slit $\zeta_{k+3}$
    \[v(\zeta_{k+3})\simeq q_{n_{k+2}}/q_{n_{k+3}}=q_{2k+5}/q_{2k+7}\]
    So, at time $t_{k+1}$, the vertical length of $\zeta_{k+3}$ would be much larger than $1$, hence would cross both torus vertically many times. In other words, for any point on the geodesic flow of $X$ at time $t_{k+1}$, the shortest horizontal segment from this point to $\zeta_{k+3}$ has length bounded by $O(1)$, We call this length the ``horizontal distance''. See Figure \ref{fig:hordist}.

    \begin{figure}[h]
    \centering
\begin{tikzpicture}[scale=1.5]

\draw (-1.6,0) -- (0.4,0);
\draw [->](-1.6,0) -- (-0.8,0);
\draw [->](-2,2) -- (-1,2);
\draw (-2,2) -- (0,2);
\draw (-1.6,0) -- (-2,2);
\draw (0.4,0) -- (0,2);
\draw [->](0.4,0) -- (0.2,1);
\draw [->](-1.6,0) -- (-1.8,1);
\draw (-0.6,1.0) -- (-1.0,1.1);
\draw[blue, dashed](-1, 1.1)--(-1.1, 2);
\draw[blue, dashed](-0.7, 0)--(-0.9, 2);
\draw[blue, dashed](-0.5, 0)--(-0.6, 1.0);
\draw[green, thick](-0.85, 1.5)--(-0.4, 1.5);
\node at (-0.3, 1.5) {$P$};
\end{tikzpicture}
\caption{The blue dashed line represents the $\zeta_{k+3}$, and the length of the green segment represents the horizontal distance from some point $P$ to $\zeta_{k+3}$.}
\label{fig:hordist}

\end{figure}

    Hence, at time $t_k$, the shortest horizontal distance from every point to
    \[\zeta_{k+3}\simeq q_{n_{k}}/q_{n_{k+1}}=O(1/k^4)\]
    By a similar argument, at time $t_k$ the shortest vertical distance from every point to $\zeta_{k-3}$ is bounded by $O(1/k^4)$. Because the slits are geodesics and the flat surface is non-positively curved, they intersect minimally. If there is some non-trivial loop on the surface that is disjoint from both slits, it has to travel outside a small neighborhood of the slit. However, by the argument above, at time $t_k$, outside a small neighborhood of the slit both tori are cut by $\zeta_{k-3}$ and $\zeta_{k+3}$ into a grid of small rectangles of size $O(1/k^4)$, so such a non-trivial loop is impossible. This shows that $\zeta_{k-3}$ and $\zeta_{k+3}$ fill the surface.
\end{proof}


\section{Combinatorics of the slit curves}\label{sec:curve graph}

The main goal of this section is Proposition \ref{Prop2} below, in which we prove that the slit curves $\zeta_k$ provided by Proposition \ref{Prop1} make linear progress (in $k$) in the curve graph $\mathcal C(S)$.  See \cite{minsky2006curve, MM_II} for background on curve graphs and subsurface projections.

\begin{prop}\label{Prop2}
For any $L>0$, there exists a sequence ${k_n}$ and a constant $B = B(S,L)>0$ so that we have
\begin{enumerate}
    \item $|t_{k_n} - t_{k_{n+1}}| < B \log k_n$.
    \item $d_{\mathcal C(S)}(\zeta_{k_n}, \zeta_{k_{n+1}}) >L.$
\end{enumerate}
\end{prop}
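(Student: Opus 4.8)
The plan is to take the subsequence to be an arithmetic progression $k_n := k_0 + nD$, where the common difference $D = D(S,L)$ and the base index $k_0 = k_0(S,L)$ are chosen large depending on $L$. With this choice, conclusion (1) is immediate from the proof of Proposition \ref{Prop1}(1): that argument actually establishes $|t_k - t_{k+D}| \le C_D\log k$ for a constant $C_D$ depending only on $D$, so $|t_{k_n}-t_{k_{n+1}}| = |t_{k_n}-t_{k_n+D}| \le C_D\log k_n$, and we set $B := C_D$. Thus the entire content is conclusion (2), which reduces to the following uniform statement: there is $D = D(S,L)$ so that $d_{\mathcal C(S)}(\zeta_k,\zeta_{k+D}) > L$ for all sufficiently large $k$.

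To prove this I would apply the Bounded Geodesic Image Theorem of Masur--Minsky \cite{MM_II} to the one-holed torus subsurfaces $Y^j_\pm \subset S$ furnished by Proposition \ref{Prop1}(2). Fix a large $k$, let $D$ be a multiple of $6$ to be chosen, and let $g$ be any geodesic in $\mathcal C(S)$ from $\zeta_k$ to $\zeta_{k+D}$. For each index $j$ with $k+6 \le j \le k+D-6$, I claim $g$ must contain a vertex $v_j$ with $d_{\mathcal C(S)}(v_j,\zeta_j) \le 1$. Indeed, since $\zeta_j$ fills the surface together with $\zeta_k$ and also together with $\zeta_{k+D}$ (the argument of Proposition \ref{propfill} applies as the relevant index gaps are $\ge 6$), both $\zeta_k$ and $\zeta_{k+D}$ cross $Y^j_\pm$, so $\pi_{Y^j_+}$ is defined on the endpoints of $g$. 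Because $\zeta_k$ is a uniformly short curve on $\gamma(t_k)$ crossing $Y^j_+$, its projection $\pi_{Y^j_+}(\zeta_k)$ is a bounded distance from $\pi_{Y^j_+}(\gamma(t_k))$, and similarly at time $t_{k+D}$; and since $t_k < t_{j-1} < t_{j+1} < t_{k+D}$, the subsurface shadow $s \mapsto \pi_{Y^j_+}(\gamma(s))$ — an unparametrized quasigeodesic in $\mathcal C(Y^j_+)$ by \cite{rafi2005characterization} — runs from $\pi_{Y^j_+}(\zeta_k)$ to $\pi_{Y^j_+}(\zeta_{k+D})$ through the window $[t_{j-1},t_{j+1}]$, across which (using that $Y^j_\pm$ is $\epsilon$-thick with $\zeta_j$ short at $t_j$) it moves a distance $\asymp a_{2j+2} \asymp j^2$ by the estimates of \cite[Lemma 2.16]{chaika2019limits}. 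Hence $d_{\mathcal C(Y^j_+)}\big(\pi_{Y^j_+}(\zeta_k),\pi_{Y^j_+}(\zeta_{k+D})\big) \gtrsim j^2$, which for $k$ large exceeds the Bounded Geodesic Image constant $M=M(S)$; so some vertex of $g$ fails to cross $Y^j_+$, hence lies in the complementary torus $Y^j_-$ (or equals $\zeta_j$), yielding the desired $v_j$.

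Finally I would count. Take $D := 6(L+3)$ and let $j$ range over $J := \{k+6,\, k+12,\, \dots,\, k+D-6\}$, a set of $L+2$ indices, all lying in $[k+6,k+D-6]$. For distinct $j,j' \in J$ the index gap is at least $6$, so $\zeta_j$ and $\zeta_{j'}$ fill $S$ and therefore $d_{\mathcal C(S)}(\zeta_j,\zeta_{j'}) \ge 3$; consequently $d_{\mathcal C(S)}(v_j,v_{j'}) \ge 3-1-1 = 1$, so the $v_j$ with $j \in J$ are $L+2$ pairwise distinct vertices of $g$. As a geodesic in a graph is an embedded path, the length of $g$ is at least $(L+2)-1 = L+1 > L$. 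Thus $d_{\mathcal C(S)}(\zeta_{k_n},\zeta_{k_{n+1}}) = d_{\mathcal C(S)}(\zeta_{k_n},\zeta_{k_n+D}) > L$ for every $k_n \ge k_0$, and with $B := C_D$ this completes the proof.

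The main obstacle is the width estimate in the second paragraph: showing that the projection of $\gamma$ to the thick torus $Y^j_\pm$ moves more than the universal Bounded Geodesic Image constant over the time window around $t_j$. This is the one step where the arithmetic of $\alpha$ (with $a_i = i^2 \to \infty$) genuinely enters, and it must be extracted from the flat-geometric bookkeeping of \cite{chaika2019limits}; everything else — the arithmetic-progression choice, the application of \cite{MM_II}, and the vertex count — is formal. I note that one does not need the sharp asymptotic $\asymp j^2$: any lower bound tending to infinity with $j$ would do, which makes this step robust.
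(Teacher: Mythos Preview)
Your approach is genuinely different from the paper's. The paper never identifies which specific subsurface carries the large projection coming from the shortness of $\zeta_j$; instead it invokes Modami--Rafi to get \emph{some} domain $V_j$ in the complement of $\zeta_j$ with large projection, and then runs an iterative ``passing-up'' argument (via \cite[Proposition~4.7]{durham2023cubulating}) through increasing complexity levels until the container is forced to be $S$ itself. Your route---arithmetic progression, direct BGIT on a specific subsurface, then a vertex count using that index-gap-$6$ slits fill---is considerably more elementary and, if it works, gives a cleaner quantitative relationship between $D$ and $L$.

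The gap is exactly where you flag it, and it is more serious than your closing paragraph suggests. Lemma~2.16 of \cite{chaika2019limits} controls the \emph{flat length} of the slit, not any curve-graph quantity; it does not by itself yield $d_{\mathcal C(Y^j_+)}(\pi_{Y^j_+}(\gamma(t_{j-1})),\pi_{Y^j_+}(\gamma(t_{j+1}))) \asymp a_{2j+2}$. What Rafi's short-curve theorem (in the Modami--Rafi quantitative form) gives is that the shortness of $\zeta_j$ forces a large projection to \emph{some} $V_j$ disjoint from $\zeta_j$, possibly the annulus $A(\zeta_j)$ or possibly an annulus about a nonseparating curve inside one torus---not specifically to $Y^j_+$. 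You would need to rule out the annular case (bounded relative twist of $\mu_\pm$ about $\zeta_j$) and identify the complementary torus as the carrier, and this requires an honest flat-geometric argument that is not in your sketch. Moreover, even granting $d_{Y^j_+}(\mu_-,\mu_+)\to\infty$, you still need $d_{Y^j_+}(\zeta_k,\zeta_{k+D})$ large, which requires that the active interval $I_{Y^j_+}$ sit inside $[t_k,t_{k+D}]$; this is plausible but is another nontrivial active-interval estimate you have not supplied. The paper's passing-up machinery is designed precisely to sidestep both issues: it never needs to know which $V_j$ occurs, and the active-interval bookkeeping is absorbed into Claim~\ref{claim:passing-up}.
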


In Lemma \ref{lem:spacing} below, we derive a more refined version of Proposition \ref{Prop2}, but the proposition above is the key result.

The proof involves an iterated argument which a blend of some results connecting short curves and subsurface projections due to Rafi \cite{rafi2005characterization} and so-called ``passing-up'' techniques for producing large subsurface projections satisfying certain properties due to Durham \cite{durham2023cubulating}.

\begin{rem}
    We note one of the main technical difficulties here is that the slit curves $\zeta_k$ need not correspond to boundary curves of subsurfaces with large projections for $\mu^-, \mu^+$.  In particular, knowing that they are pairwise filling (Proposition \ref{propfill}) alone is not enough to show that they spread out in $\calC(S)$ at a uniform rate.
\end{rem}

\begin{rem}
    In the remaining proofs in this paper, when we write that some quantity is ``uniform'' or ``uniformly bounded'', we mean that the quantity in question is controlled by the topology of $S$.
\end{rem}

In the proof of Proposition \ref{Prop2}, we will frequently use the following results due to Rafi \cite{rafi2005characterization} and Durham \cite{durham2023cubulating}, which we include here for clarity.  Please see \cite{MM_I,MM_II, rafi2005characterization, rafi2007combinatorial, durham2016augmented} for background on subsurface projections and Teichm\"uller geometry.

The first statement we need is \cite[Theorem 6.1]{rafi2005characterization}.  It says that short curves along Teichm\"uller geodesics determine large subsurface projections in their complement, and vice versa.  The version below is quantitative---the shorter the curve, the larger the projection---and follows from the Rafi's original proof in \cite{rafi2005characterization}, though it is not commonly stated this way in the literature.  We need this quantitative version to know that the increasingly shorter slit curves produce increasingly larger projections, which we can then plug into the hierarchical machinery we will describe next.

\begin{prop}[Short curves and big projections]\label{prop:short curves}
Given a finite-type surface $S$, there exists constants $\epsilon_0 = \epsilon_0(S)>0$ and $K_0=K_0(S)>0$  so that for any Teichm\"uller geodesic $(g,\mu^-, \mu^+)$ in $\Teich(S)$, the following hold:

\begin{enumerate}
    \item For any $K\geq K_0$, there exists $\epsilon_{K}>0$ so that if $\alpha$ is a simple closed curve on $S$ so that $l_{g(t)}(\alpha)<\epsilon_K$ for some time $t$, then there exists a subsurface $Y \subset S - \alpha$ with $d_Y(\mu^-,\mu^+)>K$ (or $\log d_{Y}(\mu^-,\mu^+)> K$ in case $Y=Y_{\alpha}$ is the annulus about $\alpha$). \label{item:short to big}
 
    \item For any $\epsilon\geq\epsilon_0$, there exists $K_{\epsilon}>0$ so that if $d_Y(\mu^-,\mu^+)>K_{\epsilon}$ (or $\log d_Y(\mu^-,\mu^+)>K_{\epsilon}$ when $Y$ is an annulus), then there exists a time $t$ and a simple closed curve disjoint from $Y$ so that $l_{g(t)}(\alpha)<\epsilon$ for some time $t$. \label{item:big to short}
\end{enumerate}
\end{prop}

The second statement we need is \cite[Proposition 4.7]{durham2023cubulating}.  Roughly, it states that the boundary curves of a large collection $\calU$ of subsurfaces to which a Teichm\"uller geodesic $g$ has a large projection $\pi_W(g)$ must spread out along the projection of $g$ to the curve graph $\calC(W)$ of some subsurface $W \subseteq S$.  Moreover, this statement is effective---the larger the cardinality of $\calU$, the more of $\pi_W(g)$ they cover.  We need it to gain quantitative control over how quickly the slit curves in our example spread out in the curve graph of the whole surface, $\calC(S)$.

We need some terminology to set up the statement.  Given a Teichm\"uller geodesic $(g,\mu^-,\mu^+)$ and a constant $K>0$, we say that $Y \subset S$ is $K$-\emph{relevant} if $d_Y(\mu^-,\mu^+)>K$.  We let $\Rel_K(\mu^-,\mu^+)$ denote the set of $K$-relevant domains for $g$.

We also need the notion of a $\sigma$-subdivision, which will allow us to precisely define what it means for boundary curves to spread out along the projection of a Teichm\"uller geodesic.

\begin{defin}[$\sigma$-subdivision] \label{defn:sigma subdivision}
Let $W \subseteq S$ and $\gamma:I \to \calC(W)$ be a geodesic path (i.e., segment, ray, or bi-infinite) in $\calC(W)$ between $\pi_W(\mu^-),\pi_W(\mu^+)$ coming from a geodesic $(g,\mu^-,\mu^+)$.  For $\sigma>0$, we say that a subdivision $\{x_i\}$ of $I$ is a $\sigma$-\emph{subdivision} of $\gamma$ if the $x_i$ decompose $I$ into a collection of subintervals $[x_i, x_{i+1}]$ so that for all but at most one $i$ we have $|x_{i+1}-x_i| = \sigma$, with the (possibly nonexistent) extra subinterval for which $|x_{i+1} - x_i| < \sigma$.
\end{defin}

Given a $\sigma$-subdivision $\{x_i\}$ of a geodesic $\gamma$ between $\pi_W(\mu^-),\pi_W(\mu^+)$ in $\calC(W)$ for $W \subseteq S$ and a collection of relevant subsurfaces $\calV$ of $W$, we let $\calV_i$ denote the set of domains $V \in \calV$ so that $p_{\gamma}(\partial V) \cap [x_i,x_{i+1}] \neq \emptyset$, where $p_{\gamma}$ denotes closest point projection in $\calC(W)$ to $\gamma$.  The following proposition is \cite[Proposition 4.7]{durham2023cubulating} specialized to our setting:

\begin{prop}[Strong passing-up]\label{prop:SPU}
    There exists $E=E(S)>0$ so that for any $K_2 \geq K_1 \geq 50E$, there exists $P_1 = P_1(K_1, K_2)>0$ so that if $(g,\mu^-,\mu^+)$ is a Teichm\"uller geodesic in $\Teich(S)$, the following hold.  If $\calV \subset \Rel_{K_1}(\mu^-,\mu^+)$ with $\#\calV > P_1$, then there exists $W \in \Rel_{K_2}(\mu^-,\mu^+)$ and $\calV' \subset \calV$ so that $V \sqsubset W$ for all $V \in \calV'$ and 
$$\diam_W\left(\bigcup_{V \in \calV'} \rho^V_W\right) > K_2.$$

Moreover, for any $\sigma\geq 10E$ and $n \in \mathbb{N}$, there exists $P_2(K_1,K_2,\sigma, n)>0$ so that if $\#\calV > P_2$, then we can arrange the following to hold:
\begin{itemize}
    \item If $\gamma$ is any geodesic in $\calC(W)$ between $\pi_W(\mu^-),\pi_W(\mu^+)$ and $\{x_i\}$ is a $\sigma$-subdivision of $\gamma$ determining sets $\calV_i$ as before, then 
    $$\#\{i| \calV_i \neq \emptyset\} \geq n.$$
\end{itemize}
\end{prop}

\begin{rem}
    The original version of \cite[Proposition 4.7]{durham2023cubulating} is proven is context of \emph{hierarchically hyperbolic spaces}.  The Teichm\"uller space of any finite-type surfaces with the Teichm\"uller metric is an HHS, see \cite{rafi2007combinatorial,durham2016augmented, HHS_II}.
\end{rem}


\begin{proof}[Proof of Proposition \ref{Prop2}]

By Proposition \ref{propfill}, we may assume that the slit curves $\zeta_k$ pairwise fill $S$.  We fix a subsurface projection threshold constant $L_0 = L_0(S)>0$ to be sufficiently large to make the arguments below work.

For sufficiently large $L_0 = L_0(S)>0$, we can fix $\epsilon = \epsilon(L_0, S)>0$ to be small enough so that if $\sigma$ is a simple closed curve on $S$ so that $l_{\gamma(t)}(\sigma)<\epsilon$ for some time $t$, then {item \eqref{item:short to big} of Proposition \ref{prop:short curves} provides} a subsurface $V \subset S - \zeta$ (with possibly $V$ the annulus with core $\zeta$) so that $\mathrm{diam}_{\mathcalC(V)}(\gamma)>L_0$.

For the rest of the proof, we will increase $L_0$ and $\epsilon$ as necessary while maintaining the dependencies only on $S$.


Consider the sequence of slit curves $\zeta_k$.  By item (3) of Proposition \ref{Prop1}, there exists $K_0 = K_0(\epsilon)>0$ so that if $k>K_0$, then $l_{\gamma(t_k)}(\zeta_k)<\epsilon$.  For each $k>K_0$, let $V_k$ denote the $L_0$-large subsurface in the complement $S - \zeta_k$ {provided by item \eqref{item:big to short} of Proposition \ref{prop:short curves}}.  We denote the collection of these subsurfaces $V_k$ by $\calV$.






\medskip

Our goal is to show that the $\zeta_k$ spread out in $\mathcal{C}(S)$ uniformly quickly.  Using the $V_k$ as surrogates, the following claim forces the boundary curves $\partial V_k$ of any sufficiently large subcollection of the $V_k$ to spread out in \emph{some} subsurface, with extra control over where the slit curves lie:

\begin{claim} \label{claim:passing-up}
    For any $L_1>L_0$, there exists $N_1 = N_1(S, L_0,L_1)>0$ so that for any collection $\calU \subset \calV$ with $\#\calU \geq N_1$, there exists a subsurface $W$ and $V_j, V_j, V_k, V_l\subset W$ with $V_i, V_j, V_k, V_l \in \calU$  so that
    \begin{enumerate}
        \item $d_{\mathcalC(W)}(\mu^-, \mu^+)>L_1$, and
        \item All pairwise distances in $\calC(W)$ between $\mu^-, \mu^+, \partial V_i, \partial V_j, \partial V_k, \partial V_l$ are larger than $\frac{L_1}{100}$, and $\partial V_i, \partial V_j, \partial V_k, \partial V_l$ appear in that order along any geodesic in $\calC(W)$ between $\mu^-, \mu^+$ (see Figure \ref{fig:order}).
        \item At least one of the slit curves $\zeta_i, \zeta_j, \zeta_k, \zeta_l$ corresponding to $V_i, V_j, V_k, V_l$ is contained in $W$.
    \end{enumerate}

\end{claim}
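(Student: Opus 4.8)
The plan is to run a ``passing-up'' argument in the style of Durham \cite{durham2023cubulating} on the collection $\calV$, using the curves $\partial V_k$ (together with the hierarchically-large subsurfaces $V_k$ themselves) as the input data. First I would recall the setup: each $V_k$ carries $d_{\calC(V_k)}(\mu_-,\mu_+) > L_0$, and by Behrstock's inequality the subsurfaces in $\calV$ are organized by the partial order coming from the pair $(\mu_-, \mu_+)$ (this is the standard ``time order'' on large-projection domains along a hierarchy path). The first step is to pass up: given $N_1$ subsurfaces $V_k$ with $L_0$-large projections, the passing-up lemma produces a single subsurface $W$ properly containing at least four of them, say $V_i, V_j, V_k, V_l$, with $d_{\calC(W)}(\mu_-,\mu_+) > L_1$, provided $N_1 = N_1(S, L_0, L_1)$ is taken large enough. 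This is exactly where the quantitative dependence $N_1(S, L_0, L_1)$ comes from — it is the iterated Ramsey-type/passing-up bound, and increasing $L_1$ only increases $N_1$, not the surface-dependence of anything downstream. This gives item (1).

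For item (2): having chosen $W$, I would use the Behrstock inequality / bounded geodesic image theorem inside $W$ to place the curves $\partial V_i, \dots, \partial V_l$ along a $\calC(W)$-geodesic from $\mu_-$ to $\mu_+$. Since each $V_\bullet \subsetneq W$ has large $\calC(V_\bullet)$-diameter of $\gamma$, the projections $\pi_W(\partial V_\bullet)$ are coarsely well-defined and lie near the $\calC(W)$-geodesic $[\mu_-,\mu_+]$; moreover the time-order on the $V_\bullet$ translates into an order of their $W$-projections along that geodesic. Now I would \emph{discard} elements of $\calU$ to thin out: among the (many) passed-up subsurfaces inside $W$, select four whose $W$-projections are pairwise $\frac{L_1}{100}$-separated and also $\frac{L_1}{100}$-separated from the two endpoints $\mu_\pm$ — this is possible because $d_{\calC(W)}(\mu_-,\mu_+) > L_1$ gives us a geodesic of length $> L_1$ to spread points along, and we may enlarge $N_1$ to guarantee enough candidates survive the spacing requirement. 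Relabel them $V_i, V_j, V_k, V_l$ in the induced order; this yields item (2) and Figure \ref{fig:order}.

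For item (3): this is the genuinely new ingredient and the step I expect to be the main obstacle, since a priori the slit curve $\zeta_\bullet$ and the large-projection subsurface $V_\bullet \subset S - \zeta_\bullet$ it produces need only satisfy $\zeta_\bullet \subset S - V_\bullet$ — there is no automatic containment of $\zeta_\bullet$ in $W$. The idea is to use Proposition \ref{propfill}: slit curves with indices differing by at least $6$ fill $S$, so they cannot both be disjoint from (hence both fail to intersect) a proper subsurface. Concretely, suppose toward a contradiction that none of $\zeta_i, \zeta_j, \zeta_k, \zeta_l$ is contained in $W$. Since $V_\bullet \subset W$ and $\zeta_\bullet$ is disjoint from $V_\bullet$, if $\zeta_\bullet \not\subset W$ then $\zeta_\bullet$ must intersect $\partial W$; but then $\pi_W(\zeta_\bullet)$ is defined and, by the bounded geodesic image theorem applied to the $V_\bullet$-large projection, lies close to $\partial V_\bullet$ on the $\calC(W)$-geodesic. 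The key point is then: if we passed up \emph{enough} of the $V_\bullet$ (make $N_1$ larger still, so that among the four we keep there are two indices differing by $\geq 6$ — indeed by pigeonhole we can demand many widely-separated indices), the corresponding $\zeta_\bullet$'s fill $S$ by Proposition \ref{propfill}, which is incompatible with all of them being ``trapped near $\partial W$'' in $\calC(W)$ while $W$ is a proper subsurface — a filling pair of curves has unbounded $\pi_W$-diameter for every proper $W$, forcing at least one $\zeta_\bullet$ to project far from the others, and re-examining the dichotomy ``$\zeta_\bullet \subset W$ or $\zeta_\bullet \pitchfork \partial W$'' then forces one of them into $W$. I would make this precise by a careful case analysis on which of $\partial W$ the surviving slit curves can intersect, using that $\zeta_\bullet$ is disjoint from $V_\bullet \subset W$ to control $\pi_W(\zeta_\bullet)$, and absorbing all the counting losses into a single final enlargement of $N_1 = N_1(S, L_0, L_1)$.
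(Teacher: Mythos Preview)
Your treatment of items (1) and (2) is essentially the paper's: both invoke the passing-up proposition from \cite{durham2023cubulating} (Proposition~4.7 there, with subdivision parameter $\sigma=\tfrac{1}{100}$) to produce $W$ and four well-spaced nested domains. Fine.

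The gap is in item (3). Your proposed mechanism---``a filling pair of curves has unbounded $\pi_W$-diameter for every proper $W$, forcing at least one $\zeta_\bullet$ to project far from the others, and re-examining the dichotomy then forces one of them into $W$''---does not close. There is no obstruction to \emph{all} of $\zeta_i,\zeta_j,\zeta_k,\zeta_l$ cutting $\partial W$ simultaneously, even though they pairwise fill $S$: a filling pair on a genus-$2$ surface can certainly have both members crossing the boundary of a given one-holed torus. You correctly observe that $\pi_W(\zeta_\bullet)$ lies near $\partial V_\bullet$ (since $\zeta_\bullet$ is disjoint from $V_\bullet$), and hence that the $\pi_W(\zeta_\bullet)$ are spread out along the $\calC(W)$-geodesic. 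But this spreading is \emph{consistent} with every $\zeta_\bullet$ intersecting $\partial W$; it does not force containment.

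What the paper does instead is dynamical, not combinatorial. After ruling out $\zeta_\bullet$ disjoint from $W$ (via filling, as you do) and noting at most one $\zeta_\bullet$ can lie in $\partial W$, it invokes Rafi's \emph{active intervals} \cite[Theorems~5.3 and~6.1]{rafi2014hyperbolicity}: there is a time interval $I_W\subset[0,\infty)$ along $\gamma$ during which $\partial W$ is $\epsilon$-short, and the ordering of $\partial V_i,\partial V_j,\partial V_k$ in $\calC(W)$ forces $I_{V_i},I_{V_j},I_{V_k}\subset I_W$ in that order. The time $t_j$ at which $\zeta_j$ is $\epsilon$-short is then sandwiched between $I_{V_i}$ and $I_{V_k}$, hence lies in $I_W$. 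So $\zeta_j$ and $\partial W$ are \emph{simultaneously} short on $\gamma(t_j)$, and the Collar Lemma forces them to be disjoint; since $\zeta_j\not\subset\partial W$, we get $\zeta_j\subset W$. This temporal sandwiching is the missing idea---it is exactly why four (not two) well-spaced $V_\bullet$ are needed in the statement.
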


\begin{figure}
\begin{center} \label{fig:order}
\begin{tikzpicture}
    \draw[-,purple] (0, 0)--(10, 0);
    \node at (-0.3, 0){$\mu^-$};
    \node at (10.3, 0){$\mu^+$};
    \node at (6, 2.5){$\calC(W)$};
    \filldraw (2,0.2) circle (1pt);
    \filldraw (2.1,-0.2) circle (1pt);
    \filldraw (4,0.2) circle (1pt);
    \filldraw (4,-0.25) circle (1pt);
    \filldraw (5.9,0.25) circle (1pt);
    \filldraw (6,-0.2) circle (1pt);
    \filldraw (8, 0.2) circle (1pt);
    \filldraw (8, -0.16) circle (1pt);
    \filldraw (0,0) circle (1pt);
    \filldraw (10,0) circle (1pt);
    \node at (2, 0.5){$\partial V_i$};
    \node at (4, 0.5){$\partial V_j$};
    \node at (6, 0.5){$\partial V_k$};
    \node at (8, 0.5){$\partial V_l$};
    \node at (2, -0.5){$\zeta_i$};
    \node at (4, -0.5){$\zeta_j$};
    \node at (6, -0.5){$\zeta_k$};
    \node at (8, -0.5){$\zeta_l$};
    \node at (1, 1.2){$>\frac{L_1}{100}$};
    \draw [blue](0.2,0.4) to[out=40,in=160] (1.6,0.7);
    \node at (3, 1.2){$>\frac{L_1}{100}$};
    \draw [blue](2.4,0.4) to[out=40,in=160] (3.6,0.7);
    \node at (5, 1.2){$>\frac{L_1}{100}$};
    \draw [blue](4.4,0.4) to[out=40,in=160] (5.6,0.7);
    \node at (7, -1){$>\frac{L_1}{100}$};
    \draw [blue](6.4,-0.4) to[out=-40,in=-140] (7.6,-0.5);
    \node at (9, 1.2){$>\frac{L_1}{100}$};
    \draw [blue](8.4,0.4) to[out=40,in=160] (9.9,0.5);


\end{tikzpicture}

\end{center}
\caption{Using Proposition \ref{prop:SPU}, we can arrange for the boundary curves $\partial V_i, \partial V_j, \partial V_k, \partial V_l$, and hence $\zeta_i, \zeta_j, \zeta_k, \zeta_l$, appear in order along any geodesic in $\calC(W)$ between $\mu^-, \mu^+$.}
\end{figure}

\begin{proof}[Proof of Claim \ref{claim:passing-up}]
    Items (1) and (2) {follows immediately from Proposition \ref{prop:SPU}.  In particular, by choosing the subdivision constant to be $\sigma = \frac{1}{100}$ as in the statement, we can produce $V_i, V_j$ satisfying the desired conclusions.}


    For item (3), first observe that since the $\zeta_i, \zeta_j, \zeta_k, \zeta_l$ pairwise fill $S$ and each of $V_i, V_j, V_k, V_l \subset W \subset S$, it is not possible for any of the $\zeta_i, \zeta_j, \zeta_k, \zeta_l$ to be disjoint from $W$, for if $\zeta_i$ were disjoint from $W$, then $\zeta_j$ would have to intersect $\partial V_j$, which is impossible.  Hence they either must intersect $\partial W$, be contained in $\partial W$, or be contained in $W$ itself.  Moreover, at most one of the slit curves can be contained in $\partial W$, so we may assume, up to reindexing, that $\zeta_i, \zeta_j, \zeta_k$ are not contained in $\partial W$.

    By \cite[Theorem 5.3]{rafi2014hyperbolicity}, there is an \emph{active interval} of times $I_{V_i}$ along the Teichm\"uller geodesic $\gamma$ during which the curves in $\partial V_i$ are shorter than $\epsilon$ and outside of which $\gamma$ has a bounded diameter projection to $\calC(V_i)$, and similarly for $V_j,V_k,W$.  By our choice of the arrangement and location of $\partial V_i,\partial V_j,\partial V_k$ along any geodesic in $\calC(W)$ between $\mu^-, \mu^+$ and the fact that the projection of $\gamma$ to $\calC(W)$ is a uniform (unparameterized) quasigeodesic \cite[Theorem 6.1]{rafi2014hyperbolicity}, we may increase $L_0 = L_0(S)>0$ as necessary to arrange that $I_{V_i}, I_{V_j}, I_{V_k} \subset I_W$ and that these intervals appear in this order along the parameter interval $[0, \infty)$ for $\gamma$.

    Finally, observe that if $t \in [0,\infty)$ is such that $l_{\gamma(t)}(\zeta_j)<\epsilon$, then $t$ comes after $I_{V_i}$ and before $I_{V_k}$, again because of \cite[Theorem 6.1]{rafi2014hyperbolicity}.  Hence $t \in I_W$, and in particular $\zeta_j$ and $\partial W$ are short simultaneously along $\gamma$.  Since this is only possible when $\zeta_j$ is contained in $W$ by the Collar Lemma, this proves item (3) of the claim, as required.
\end{proof}

The rest of the argument proceeds by induction on the the complexity of the subsurfaces produced using Claim \ref{claim:passing-up} and its analogues below.  For any surface $Y$, recall that its \emph{topological complexity} is $\xi(Y) = g(Y) + p(Y)$, where $g(Y)$ counts its genus and $p(Y)$ counts its punctures.  Note that when $X \subset Y$ is a proper subsurface, we have $\xi(X) < \xi(Y)$.

We also require some more organizational notation.  Since the domains in $\calV$ come in an order provided by the order of the slit curves $\zeta_i$, we can partition $\calV$ into subcollections $\calV_j$ of consecutive domains, with each of the $\#V_j = N_1$ (for $N_1$ as in Claim \ref{claim:passing-up}), and the domains in $\calV_j$ immediately preceding those in $\calV_{j+1}$ for every $j \geq 1$.

For each $j$, Claim \ref{claim:passing-up} provides potentially many subsurfaces $W$ satisfying the conclusions of the claim for $\calU = \calV_j$.  Choose $W_j$ to be one of the subsurfaces that maximizes topological complexity among all such subsurfaces, and let $\calW$ be the collection of these $W_j$.  Given $W \in \calW$, let $J_W = \{j|W_j = W\}$.

\begin{claim}\label{claim:bounding containers}
    For any $W \in \calW$ with $W \neq S$, we have $\#J_W = 1$.
\end{claim}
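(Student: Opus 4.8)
The plan is to argue by contradiction: suppose $\#J_W \ge 2$ for some $W \in \calW$ with $W \neq S$, say $j, j' \in J_W$ with $j < j'$, so that $W_j = W_{j'} = W$. The key point is that the domains in $\calV_j$ all precede the domains in $\calV_{j'}$ in the ordering coming from the slit curves $\zeta_i$, and hence (by the active-interval discussion in the proof of Claim \ref{claim:passing-up}, via \cite[Theorem 5.3]{rafi2014hyperbolicity} and \cite[Theorem 6.1]{rafi2014hyperbolicity}) all of the active intervals $I_{V_i}$ for $V_i \in \calV_j$ lie before all of the active intervals $I_{V_i}$ for $V_i \in \calV_{j'}$ along $[0,\infty)$. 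Since Claim \ref{claim:passing-up} forces these active intervals to be nested inside $I_W$ for both $j$ and $j'$, the active interval $I_W$ must be long enough to contain all of these — in particular it contains active intervals of domains $V_i \subset W$ that are spread out along a long portion of $\calC(W)$, coming from \emph{both} $\calV_j$ and $\calV_{j'}$.

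First I would make precise that the boundary curves $\partial V_i$ for the four chosen domains in $\calV_j$, together with those from $\calV_{j'}$, together with $\mu_-, \mu_+$, all project to $\calC(W)$ and appear in a definite order along a geodesic from $\mu_-$ to $\mu_+$, with large gaps (this is item (2) of Claim \ref{claim:passing-up} applied to each of $\calV_j, \calV_{j'}$, combined with the fact that $j < j'$ forces the $\calV_j$-curves to come before the $\calV_{j'}$-curves). Then I would invoke \cite[Proposition 4.7]{durham2023cubulating} — the same passing-up tool — but now applied to the \emph{combined} collection of the eight boundary curves $\partial V_i$ (four from each of $\calV_j$ and $\calV_{j'}$) which are all distributed along the $\calC(W)$-geodesic between $\mu_-$ and $\mu_+$ with controlled spacing. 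This produces a subsurface $W' \subsetneq W$ with a large projection distance $d_{\calC(W')}(\mu_-,\mu_+)$, containing at least two of the $V_i$, with their boundary curves ordered and spaced along the $\calC(W')$-geodesic, and — by the same Collar Lemma argument as in the proof of item (3) of Claim \ref{claim:passing-up} — containing at least one of the corresponding slit curves.

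The point of extracting such a $W'$ is that it contradicts the maximality in the choice of $W_j$ (or $W_{j'}$): we would have found a subsurface satisfying all three conclusions of Claim \ref{claim:passing-up} for $\calU = \calV_j$ (or for $\calU = \calV_{j'}$), but with strictly larger topological complexity, since $W' \subsetneq W$ is \emph{false} — wait, I have this backwards. Let me reconsider: the maximality says $W_j$ has \emph{maximal} complexity. Having $\#J_W \ge 2$ means $W$ serves as the maximal container for two separated blocks. The correct contradiction instead goes the other way: because the active intervals from $\calV_j$ and $\calV_{j'}$ are separated and both nested in $I_W$, one can find a \emph{strictly smaller} $W' \subsetneq W$ still containing enough structure from $\calV_{j'}$ alone (the later block) to satisfy Claim \ref{claim:passing-up}'s conclusions for $\calU = \calV_{j'}$; but then $\xi(W') < \xi(W) = \xi(W_{j'})$, contradicting the maximal choice of $W_{j'}$. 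Unwinding which direction the inequality goes is exactly the place where one must be careful.

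The main obstacle, then, will be correctly executing this last step: showing that the separation of the two blocks of active intervals inside $I_W$ genuinely forces the existence of a proper subsurface $W' \subsetneq W$ that still satisfies \emph{all} of Claim \ref{claim:passing-up}'s conclusions — in particular conclusion (3), that one of the slit curves is contained in $W'$ rather than merely in $W$. This requires re-running the active-interval / hyperbolicity / Collar-Lemma argument of Claim \ref{claim:passing-up} one level down, being careful that the constants $L_0, L_1$ (and the induced spacing $\tfrac{L_1}{100}$) are large enough to survive the passage to $W'$, and that the ordering of active intervals along $[0,\infty)$ is preserved under the projection to $\calC(W')$. Once that is set up, the contradiction with the maximality of $\xi(W_{j'})$ is immediate, giving $\#J_W = 1$.
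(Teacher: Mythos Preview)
Your proposal has a genuine gap: neither of your two attempted contradictions actually contradicts anything. In the second (and final) attempt, you produce a proper subsurface $W' \subsetneq W$ that still satisfies the conclusions of Claim~\ref{claim:passing-up} for $\calU = \calV_{j'}$, and then assert that $\xi(W') < \xi(W) = \xi(W_{j'})$ contradicts the maximal choice of $W_{j'}$. But maximality means $W_{j'}$ was chosen to have the \emph{largest} complexity among all valid containers for $\calV_{j'}$; exhibiting a valid container of \emph{smaller} complexity is perfectly consistent with that choice. To contradict maximality you would need a valid container $W'$ with $\xi(W') > \xi(W)$, and nothing in your argument produces one. Your first attempt has the same problem, as you yourself noticed.

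The paper's proof is a two-line argument that bypasses all of this. If $W = W_j = W_{j'}$ for distinct $j, j'$, then item~(3) of Claim~\ref{claim:passing-up} --- applied once to $\calV_j$ and once to $\calV_{j'}$ --- yields two \emph{distinct} slit curves $\zeta_a, \zeta_b$ (one from each block) with $\zeta_a, \zeta_b \subset W$. But by Proposition~\ref{propfill} any two distinct slit curves fill $S$, which is impossible if both lie in a proper subsurface $W \subsetneq S$. That is the whole proof; no passing-up, no active-interval analysis, and no appeal to maximality is needed. You had all the ingredients (you explicitly invoke item~(3) and the filling property in your discussion) but did not notice that they already give the contradiction directly.
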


\begin{proof}[Proof of Claim \ref{claim:bounding containers}]
Suppose that $W = W_i$ and $W = W_j$ for $i \neq j$.  Then item (4) of Claim \ref{claim:passing-up} provides distinct slit curves $\zeta_i \neq \zeta_j$ with $\zeta_i, \zeta_j \subset W$, which is impossible since $\zeta_i, \zeta_j$ fill all of $S$.  This completes the proof.
\end{proof}

Claim \ref{claim:bounding containers} says that the $W_i$ are all distinct.  Moreover, the $W_i$ all have topological complexity at least $3$ since each properly contains a subsurface of complexity at least $2$.

\begin{claim}\label{claim:passing-up, step 2}
    The conclusions of Claim \ref{claim:passing-up} hold when replacing the collection $\calV$ with the collection $\calW$.
\end{claim}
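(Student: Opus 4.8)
The plan is to re-run the proof of Claim \ref{claim:passing-up} with the collection $\calW$ in place of $\calV$: each container $W_j \in \calW$ will play the role of a surrogate subsurface $V_k$, and for each $j$ I fix once and for all a \emph{representative slit curve} $\zeta^{(j)} \subset W_j$ --- one of the slit curves supplied by item (3) of Claim \ref{claim:passing-up} when $W_j$ was produced from the block $\calV_j$ --- to play the role of $\zeta_k$. For this transfer to go through I need to check that $\calW$, together with its representative slit curves, carries the structural features of $\calV$ that the proof of Claim \ref{claim:passing-up} actually used. Four such features suffice: (a) every $W \in \calW$ satisfies $d_{\calC(W)}(\mu^-,\mu^+) > L_1 > L_0$ by item (1) of Claim \ref{claim:passing-up}, which is precisely the input needed for \cite[Proposition 4.7]{durham2023cubulating} and which gives each $W$ an active interval $I_W$ by \cite[Theorem 5.3]{rafi2014hyperbolicity}; (b) the $W_j$ are pairwise distinct by Claim \ref{claim:bounding containers}, after disposing of the case that some $W_j = S$, in which applying items (1)--(4) of Claim \ref{claim:passing-up} to $\calV_j$ already produces the sought configuration inside $\calC(S)$ itself and terminates the induction, so we may assume all $W_j \subsetneq S$; (c) $\calW$ inherits a linear order, since $W_j$ is built from the consecutive block $\calV_j$ and $\zeta^{(j)}$ is one of the four slit curves selected from that block, so the $\zeta^{(j)}$ have strictly increasing indices and order the $W_j$, letting us speak of ``consecutive'' containers and of curves appearing ``in order''; and (d) the $\zeta^{(j)}$ pairwise fill $S$, since they form a subcollection of the slit curves, which we have already arranged (via Proposition \ref{propfill}) to pairwise fill $S$, this being the only way the filling hypothesis entered the proof of item (3) of Claim \ref{claim:passing-up}.

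Granting (a)--(d), items (1) and (2) of the claim transfer immediately: \cite[Proposition 4.7]{durham2023cubulating} applied to any $\calU \subset \calW$ of size at least the appropriate passing-up constant produces a container $W'$, strictly containing four elements $W_a, W_b, W_c, W_d \in \calU$, whose boundary curves are pairwise $\calC(W')$-far and appear in order along a $\calC(W')$-geodesic between $\mu^-$ and $\mu^+$. The substantive point is item (3), namely that one of $\zeta^{(a)}, \zeta^{(b)}, \zeta^{(c)}, \zeta^{(d)}$ lies in $W'$; here I would reuse the active-interval argument of Claim \ref{claim:passing-up}, whose one nontrivial prerequisite is the \emph{linking} statement that whenever $\zeta^{(j)}$ is $\epsilon$-short along $\gamma$, that time lies in $I_{W_j}$. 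Unlike in the base case, $W_j$ is not the Rafi subsurface attached to $\zeta^{(j)}$, so this linking is not automatic; but it follows by composition: writing $\zeta^{(j)} = \zeta_m$, the Rafi subsurface $V_m \in \calV$ extracted from $\zeta_m$ has active interval $I_{V_m}$ containing the $\epsilon$-shortness window of $\zeta_m$ exactly as in the base case, and since $V_m$ is one of the four domains of $\calV_j$ used to build $W_j$, the nesting $I_{V_m} \subset I_{W_j}$ arranged (after enlarging $L_0$) in the proof of item (3) of Claim \ref{claim:passing-up} yields $\{\, t : l_{\gamma(t)}(\zeta^{(j)}) < \epsilon \,\} \subset I_{V_m} \subset I_{W_j}$. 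With the linking in hand the remainder is verbatim: the filling property rules out any of $\zeta^{(a)}, \zeta^{(b)}, \zeta^{(c)}, \zeta^{(d)}$ being disjoint from $W'$ or more than one of them lying in $\partial W'$, and since $I_{W_a}, I_{W_b}, I_{W_c} \subset I_{W'}$ appear in that order, a middle representative slit curve must be short exactly when $\partial W'$ is, hence is contained in $W'$ by the Collar Lemma.

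The step I expect to be the main obstacle is the linking statement above: it is the single place where replacing the geometrically simple domains $V_k$ by the topologically larger containers $W_j$ could break the passing-up argument, and making it precise requires carefully tracking the shortness window of $\zeta^{(j)}$ through the intermediate domain $V_m$ and through the nesting of active intervals recorded when $W_j$ was first produced. Once this is in place the claim follows, and since $W'$ strictly contains the $W_j$ from which it is built --- with $\xi(W_j) \geq 3$ --- we have $\xi(W') > \xi(W_j)$; thus iterating Claim \ref{claim:passing-up, step 2} strictly increases topological complexity, and the induction terminates once the container reaches $S$ itself.
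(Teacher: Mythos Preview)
Your argument is far more elaborate than needed, and the detour through active intervals contains a gap.  The paper's proof is essentially one sentence: you have already fixed a representative slit curve $\zeta^{(j)} \subset W_j$ (this is item (3) of Claim~\ref{claim:passing-up} when $W_j$ was built from $\calV_j$), and the container $W'$ produced by \cite[Proposition 4.7]{durham2023cubulating} \emph{contains} each of the four selected $W_a, W_b, W_c, W_d$.  Hence $\zeta^{(a)} \subset W_a \subset W'$, and item (3) is immediate --- no Collar Lemma, no active intervals, no filling hypothesis required at this stage.  You set up exactly this containment in your first paragraph and then never use it.

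The specific gap in your route is the linking assertion that the $\epsilon$-shortness window of $\zeta_m$ lies in $I_{V_m}$ ``exactly as in the base case.''  The base case proves no such thing: there, the argument shows that the shortness window of the \emph{middle} slit curve $\zeta_j$ lies between $I_{V_i}$ and $I_{V_k}$, and hence in $I_W$, but it never places that window inside $I_{V_j}$ itself.  Indeed, $V_m \subset S - \zeta_m$ need not be the annulus with core $\zeta_m$, so $\partial V_m$ being short is a priori unrelated to $\zeta_m$ being short.  You could repair this by quoting instead that the base case already places the shortness window of $\zeta^{(j)}$ inside $I_{W_j}$ directly --- but once you notice $\zeta^{(j)} \subset W_j \subset W'$, the entire active-interval apparatus becomes unnecessary for this step.
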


\begin{proof}
    Since items (1) and (2) follow from Proposition \ref{prop:SPU}, the only item to check is item (3).  However, for each $i$, we have the containments $\zeta_i, V_i \subset W_i$, so any container domain $Y$ produced by Proposition \ref{prop:SPU} necessarily satisfies $\zeta_j, V_j, \subset W_j \subset Y$ for any of the four indices produced in the claim.  This completes the proof.
\end{proof}


Once again partitioning the domains in $\calW$ as we did with $\calV$ above into ordered subcollections, we can use Claim \ref{claim:passing-up, step 2} to produce another collection $\mathcal{Z}$ of container domains for the $W_i$ which are of complexity at least $4$.  Moreover, it only takes $N_1$-many of the $W_i\in \calW$ to produce each such container, while each such $W_i$ only took $N_1$-many of the $V_j \in \calV$ to produce.  

Since $\xi(S) = 6$, we can repeat this process at most two more times, at each level only increases the number of consecutive domains in $\calV$ that we need by a multiplicative factor of $N_1 = N_1(S)$.  In particular, by taking any collection $\mathcal A$ of $N_1^6$-many consecutive domains from $\calV$, we can use the appropriate version of Claim \ref{claim:passing-up} to produce domains $Z_i, Z_j$ so that
\begin{itemize}
    \item There are slit curves $\zeta_i, \zeta_j$ and $V_i, V_j$ so that $V_i \subseteq Z_i$ and $V_j \subseteq Z_j$, with either $\zeta_i \subset V_i$ (if $V_i \subsetneq Z_i$) or $\zeta_i \perp V_i$ (if $V_i = Z_i$), and similarly for $j$;
    \item $d_S(\partial Z_i, \partial Z_j) > \frac{L_0}{100}$.
\end{itemize}

Then since the slit curves $\zeta_i, \zeta_j$ project at most distance $1$ from $\partial Z_i, \partial Z_j$ in $\calC(S)$, we see that $d_S(\zeta_i, \zeta_j)> \frac{L_0}{100} - 2$.  On the other hand, since the slit curves become short along $\gamma$ in their given order and $\gamma$ projects to a uniform (unparameterized) quasigeodesic in $\calC(S)$, we have that the slit curves corresponding to first and last domains in $\mathcal A$ must also project roughly $\frac{L_0}{100}$ apart in $\calC(S)$.  This completes the proof of item (2) of Proposition \ref{Prop2}, and item (1) of this proposition follows from item (1) of Proposition \ref{Prop1}.

\end{proof}

In the following lemma, we record the information that we actually need from Proposition \ref{Prop2}.  Roughly, the lemma says that we can find a sequence of times $\{t_k\}$, whose spacing as a time parameter grows like $\log$ (in the index $k$), but whose spacing in the curve graph grows linearly (with the index $k$), with the corresponding intervals in the curve graph having bounded overlap.

We set some notation for the statement.  Recall that Masur--Minsky \cite{masur1999geometry} proved that Teichm\"uller geodesics project to unparameterized quasi-geodesics with uniform constants controlled by (the topology of) $S$.  Since $\gamma$ has a minimal non-uniquely ergodic vertical foliation, its projection $\pi_S(\gamma) \subset \mathcal{C}(S)$ is a quasi-geodesic ray.  Fix a geodesic ray $\Gamma$ in its asymptotic class\footnote{Despite the fact that $\mathcal{C}(S)$ is locally infinite, one can find geodesic ray representatives using the finiteness properties of tight geodesics \cite{bowditch2008tight}.}. For each $k$, let
$$\Gamma_k = \pi_{\Gamma}(\pi_S(\gamma|_{[t_k, t_{k+1}]}))$$
denote the closest-point projection to $\Gamma$ in $\mathcal{C}(S)$ of the projection to $\mathcal{C}(S)$ of the restriction of $\gamma$ to $[t_k, t_{k+1}]$.

\begin{lem}\label{lem:spacing}
    For our Teichm\"uller geodesic $\gamma$ and any sufficiently large $L=L(S)>0$, there exists a sequence of times $\{t_k\}$ and constants $C = C(S)>0$ so that for all $k$, we have
    \begin{enumerate}
    \item $|t_k - t_{k+1}| < C \log k$;
    \item $d_{\mathcal{C}(S)}(\gamma(t_k),\gamma(t_{k+1}))>L$;
    \item $\mathrm{diam}_{\mathcal{C}(S)}(\Gamma_k)> L/2$;
    \item If $j = k-1$ or $j=k+1$ then $\mathrm{diam}_{\mathcal{C}(S)}(\Gamma_k \cap \Gamma_{j})<L/10$;
    \item If $j \neq k-1, k, k+1$, then $\Gamma_j \cap \Gamma_k = \emptyset$.
    \end{enumerate}
\end{lem}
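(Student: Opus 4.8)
The plan is to upgrade Proposition \ref{Prop2} to the statement of Lemma \ref{lem:spacing} by passing to a sparser subsequence of the slit times and invoking coarse hyperbolicity of the curve graph together with the unparameterized quasigeodesic property of $\pi_S(\gamma)$. First I would fix the constants: let $Q = Q(S)$ be the quasigeodesic constant from Masur--Minsky \cite{masur1999geometry}, let $\delta = \delta(S)$ be the hyperbolicity constant of $\calC(S)$, and let $L = L(S)$ be chosen large relative to $Q$ and $\delta$ (large enough that the elementary hyperbolic-geometry estimates below have slack, e.g.\ $L > 100 Q\delta$). Apply Proposition \ref{Prop2} with threshold $L' := 10 Q L$ to obtain a subsequence $\{k_n\}$ with $|t_{k_n} - t_{k_{n+1}}| < B\log k_n$ and $d_{\calC(S)}(\zeta_{k_n}, \zeta_{k_{n+1}}) > L'$; relabel this subsequence as $\{t_k\}$ and absorb $B$ into $C$. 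Since by item (3) of Proposition \ref{Prop1} each $\zeta_k$ is short along $\gamma$ at time $t_k$, the Collar Lemma gives $d_{\calC(S)}(\gamma(t_k), \zeta_k) \le 1$ (indeed $\zeta_k$ is within distance $1$ of any short marking at time $t_k$), so item (2) follows immediately with room to spare, and item (1) is exactly item (1) of Proposition \ref{Prop2} after relabeling.

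The content of items (3)--(5) is geometric. Because $\pi_S(\gamma)$ is a $(Q,Q)$-unparameterized quasigeodesic in the $\delta$-hyperbolic space $\calC(S)$, the Morse lemma gives a constant $R = R(Q,\delta)$ so that $\pi_S(\gamma)$ lies in the $R$-neighborhood of $\Gamma$ and vice versa, and moreover the nearest-point projection to $\Gamma$ is coarsely monotone along $\gamma$: if $t < t'$ then $\pi_\Gamma(\pi_S(\gamma(t)))$ precedes $\pi_\Gamma(\pi_S(\gamma(t')))$ along $\Gamma$ up to an additive error of $O(Q\delta)$. For item (3): the endpoints $\gamma(t_k)$ and $\gamma(t_{k+1})$ of the segment project to points of $\Gamma$ whose distance is at least $d_{\calC(S)}(\gamma(t_k),\gamma(t_{k+1})) - 2R \ge L' - O(Q) - 2R$, since nearest-point projection to a geodesic is coarsely $1$-Lipschitz and $\gamma(t_k), \gamma(t_{k+1})$ are within $O(Q)$ of $\zeta_k, \zeta_{k+1}$; for $L$ large this is $\ge L/2$. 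For items (4) and (5): coarse monotonicity means $\Gamma_k$ is coarsely the sub-interval of $\Gamma$ between $\pi_\Gamma(\gamma(t_k))$ and $\pi_\Gamma(\gamma(t_{k+1}))$, with error $O(Q\delta)$ at each end. Consecutive intervals $\Gamma_k, \Gamma_{k+1}$ share only the endpoint region near $\pi_\Gamma(\gamma(t_{k+1}))$, so their intersection has diameter $O(Q\delta) < L/10$ once $L$ is large; and non-consecutive intervals are separated because the gap between $\pi_\Gamma(\gamma(t_{k+1}))$ and $\pi_\Gamma(\gamma(t_{k+2}))$ along $\Gamma$ is at least $L/2 \gg O(Q\delta)$, so $\Gamma_j$ and $\Gamma_k$ are disjoint for $|j - k| \ge 2$. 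The one subtlety in item (5) is ensuring genuine emptiness rather than merely small overlap; this is handled by choosing $L$ large enough that the coarse-monotonicity error is strictly smaller than half the minimal gap, so the $O(Q\delta)$-fattened intervals are still pairwise disjoint, and then noting $\Gamma_k$ as defined (a nearest-point projection of a connected set) is contained in that fattened interval.

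The main obstacle I anticipate is making the coarse monotonicity of projection-to-$\Gamma$ along $\gamma$ precise enough to get clean disjointness in item (5). The issue is that $\pi_S(\gamma)$ is only an \emph{unparameterized} quasigeodesic, so a priori $\pi_S(\gamma)$ could backtrack on small scales; one must use that the image is a quasigeodesic (not just that $\gamma$ passes near a sequence of points in order) together with stability of quasigeodesics in hyperbolic spaces to conclude that its nearest-point projection to $\Gamma$ is monotone up to bounded error. Concretely, I would cite \cite[Theorem 6.1]{rafi2014hyperbolicity} (or directly \cite{masur1999geometry}) for the quasigeodesic property, then invoke the standard fact that a quasigeodesic in a $\delta$-hyperbolic space fellow-travels a geodesic with the same endpoints and that nearest-point projection collapses it to a coarsely monotone subpath; the quantitative version of this is routine hyperbolic geometry but needs to be stated carefully so that all the error terms are genuinely functions of $S$ alone. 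Once that monotonicity statement is in hand, items (3)--(5) follow by bookkeeping with the separation estimate $d_{\calC(S)}(\gamma(t_k),\gamma(t_{k+1})) > L'= 10QL$ coming from Proposition \ref{Prop2}, choosing $L$ at the end to dominate all accumulated additive constants.
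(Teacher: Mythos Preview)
Your proposal is correct and follows essentially the same approach as the paper: both derive items (1)--(2) from Proposition~\ref{Prop2} (and Proposition~\ref{Prop1}(1)) and then obtain items (3)--(5) from the unparameterized quasigeodesic property of $\pi_S(\gamma)$ together with hyperbolicity of $\calC(S)$, the key point being that backtracking---what you call failure of coarse monotonicity of $\pi_\Gamma$---is bounded in terms of $S$ alone. Your write-up is in fact more careful with constants than the paper's, which simply asserts the bounded-backtracking fact and notes that item (5) follows from (3) and (4).
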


\begin{proof}
    Fixing $L = L(S)>0$ below, item (1) follows easily from the proof of part 1 of Proposition \ref{Prop1}.  Item (2) follows from Proposition \ref{Prop2}.  Item (3) follows from item (2) plus the fact that $\pi_S(\gamma)$ is a uniform (in $S$) unparameterized quasi-geodesic, and hence is uniformly close (in $S$) to $\Gamma$ by uniform hyperbolicity of $\mathcal{C}(S)$.

    For item (4), since $\pi_S(\gamma)$ is a uniform (unparameterized) quasi-geodesic, it can backtrack at most a bounded amount.  By hyperbolicity of $\mathcal{C}(S)$, this backtracking is efficiently recorded on the geodesic ray $\Gamma$.  In particular, by choosing $L=L(S)>0$ sufficiently large, we can guarantee that the overlap of $\Gamma_k$ with $\Gamma_{k-1}$ or $\Gamma_{k+1}$ is a controlled fraction of $L$, e.g. $\frac{L}{10}$.

    Item (5) now follows easily from items (3) and (4), completing the proof.
\end{proof}



 \section{Confirming sublinear Morseness}

In this final section, we confirm that our examples are sublinearly Morse.  For our purposes, the definition of sublinearly Morse is not strictly necessary, though we include it below; see \cite{qing2024sublinearly, durham2022geometry} for more background in our setting.

Just for the sake of completeness, we recall the definition of sublinearly Morseness in \cite[Definition 3.2]{qing2024sublinearly}, also see \cite[Definition 2.3]{durham2022geometry}:

A sublinearly Morse geodesic ray $\gamma$ satisfies a sublinear version of the Morse property satisfied by geodesics in hyperbolic spaces.  Roughly speaking, this says that quasigeodesics that start and end on $\gamma$ must stay sublinearly close, for some uniform sublinear function.

Let $\kappa: [0, \infty)\to [1, \infty)$ be a sublinear function, i.e. $\displaystyle \lim_{t\rightarrow\infty}\kappa(t)/t=0$.  Let $X$ be a proper geodesic metric space and fix a basepoint $\mathfrak{o} \in X$.

\begin{defin}
A closed set $Z\subseteq X$ is called {\em $\kappa$-Morse} if there is a proper function $m_Z: \mathbb{R}^+\times\mathbb{R}^+\rightarrow\mathbb{R}^+$, such that for any sublinear function $\kappa'$, any $r>0$, there exists $R\geq r$, such that for any $(q, Q)$-quasi-geodesic ray $\beta$ with $\beta(0)=\mathfrak{o}$, and $m_Z(q, Q)$ sufficiently small compared to $r$, we have
\[d_X(\beta(R), Z)\leq\kappa'(R)\implies \text{ for any }t\in [0, r], d(\beta(t), Z)\leq m_Z(q, Q)\kappa (d_X(\mathfrak{o},\beta(t)))\]
\begin{itemize}
    \item The function $m_Z$ is called a {\em Morse gauge} of $Z$.
\end{itemize}
\end{defin}

The following is a useful criterion from \cite{durham2022geometry} for a Teichm\"uller geodesic to be sublinearly-Morse.

\begin{defin}
    For a given sublinear function $\kappa$, we say that a Teichm\"uller geodesic $\beta$ satisfies the $\kappa$-\emph{bounded projections} property if
    $$d_Y(\beta(0), \beta(t)) \leq C \cdot \kappa(t),$$
    for some constant $C = C(S)>0$ and for all proper subsurfaces $Y \subsetneq S$.
\end{defin}

The following is \cite[Theorem K, part 2]{durham2022geometry}:

\begin{thm}\label{thm:k-bdd}
    There exists a $p = p(S)>0$ so that if $\kappa^{2p}$ is sublinear and $\beta$ is a Teichm\"uller geodesic with $\kappa$-bounded projections, then $\beta$ is $\kappa^{2p}$-Morse.
\end{thm}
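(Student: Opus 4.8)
The plan is to use that $(\Teich(S), d_{\Teich})$ is a hierarchically hyperbolic space whose coordinate spaces are the curve graphs $\calC(Y)$ of essential subsurfaces $Y \subseteq S$, together with Rafi's distance formula: for a sufficiently large threshold $K$ and all $X, Z \in \Teich(S)$,
\[ d_{\Teich}(X, Z) \;\asymp_K\; \sum_{Y \subseteq S}\, \big[\, d_Y(X, Z)\, \big]_K, \]
where $[t]_K = t$ if $t \ge K$ and $0$ otherwise, and the sum ranges over all essential subsurfaces, including annuli (whose $\calC$-terms record relative twisting) and with the thin-part ``size'' contributions governed by the same data. Recall that $\beta$ is $\kappa^{2p}$-Morse if for every $(q,Q)$ there is $m = m(q,Q)$ such that every $(q,Q)$-quasigeodesic $\sigma$ with endpoints on $\beta$ lies in $\mathcal{N}_{\kappa^{2p}}(\beta, m) = \{\, x : d_{\Teich}(x, \beta) \le m \cdot \kappa^{2p}(d_{\Teich}(\mathfrak{o}, x))\,\}$ with $\mathfrak{o} = \beta(0)$. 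So, fixing $(q,Q)$, a $(q,Q)$-quasigeodesic $\sigma$ from $\beta(a)$ to $\beta(b)$ with $a \le b$, and $x \in \sigma$, the goal is $d_{\Teich}(x, \beta) = O_{q,Q}(\kappa^{2p}(\tau))$ where $\beta(\tau)$ is a coarse closest point of $\beta$ to $x$; a standard bootstrap (using $\tau \le d_{\Teich}(\mathfrak{o}, x) + d_{\Teich}(x, \beta)$ and that $\kappa^{2p}$ is nondecreasing and sublinear) then upgrades this to the definition with $d_{\Teich}(\mathfrak{o}, x)$ in place of $\tau$, and $\tau \le b$.

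First I would replace $\sigma$ by a hierarchy path between $\beta(a)$ and $\beta(b)$ — a Rafi resolution of a hierarchy, equivalently an HHS hierarchy path — which uniformly fellow-travels $\sigma$ and whose projection to each $\calC(Y)$ is coarsely monotone; this is harmless for a neighborhood estimate and lets one apply the Behrstock inequality freely. The distance formula then reduces the task to bounding $\sum_Y \big[\, d_Y(x, \beta(\tau))\, \big]_K$. For the top-level term $Y = S$: by Masur--Minsky, $\pi_S(\beta)$ and $\pi_S(\sigma)$ are uniform unparameterized quasigeodesics in the $\delta$-hyperbolic graph $\calC(S)$ sharing their endpoints, so the Morse lemma in hyperbolic spaces gives $d_S(x, \beta) = O_{q,Q}(1)$.

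The heart of the matter is the proper subsurfaces. For each proper $Y \subsetneq S$, the $\kappa$-bounded projections hypothesis and the triangle inequality in $\calC(Y)$ give $d_Y(\beta(a), \beta(b)) \le d_Y(\beta(0), \beta(a)) + d_Y(\beta(0), \beta(b)) \le 2C\kappa(b)$ (using that $\kappa$ is nondecreasing); together with coarse monotonicity of the hierarchy path this bounds $d_Y(x, \beta(\tau)) \le 2C\kappa(b) + O(1)$ for each \emph{individual} proper $Y$. The remaining — and, I expect, genuinely hard — point is that many subsurfaces may contribute to the sum at once, so this term-by-term estimate is far too lossy. To control the count I would run the ``passing-up''/large-links machinery of Durham \cite[Proposition 4.7]{durham2023cubulating} in reverse, paralleling Section \ref{sec:curve graph}: if $N$ distinct subsurfaces carried projection above threshold along the relevant segment, passing up produces a subsurface of strictly larger complexity inheriting a projection of size a definite power of $N$; since \emph{that} projection is still bounded by $O(\kappa(b))$ for proper subsurfaces, this caps $N$, and iterating through the at most $\xi(S)$ complexity levels converts the crude estimate into a bound by a controlled power $\kappa^{2p}(b)$ of $\kappa(b)$, with $p = p(S)$ determined by $\xi(S)$ and the refined passing-up count. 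Sublinearity of $\kappa^{2p}$ makes $\mathcal{N}_{\kappa^{2p}}$ a bona fide sublinear neighborhood, finishing the proof.

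The trickiest bookkeeping — the step where I expect the real work to be — is matching the reverse passing-up count against Rafi's active-interval description \cite[Theorem 5.3]{rafi2014hyperbolicity} of which subsurfaces $Y$ can actually see $x$ and $\beta(\tau)$ far apart, and tracking how the threshold and the number of surviving subsurfaces degrade through each of the $\xi(S)$ inductive levels so that the exponent comes out exactly $2p$ and no larger. This is essentially the accounting driving Claims \ref{claim:passing-up}--\ref{claim:passing-up, step 2}, but run to produce an upper bound on a detour rather than a lower bound on progress in $\calC(S)$.
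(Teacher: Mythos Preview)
The paper does not prove this theorem at all: it is quoted verbatim as \cite[Theorem K, part 2]{durham2022geometry} and used as a black box in the next theorem. There is therefore no ``paper's own proof'' to compare your sketch against; what you have written is an attempt to reprove a result of Durham--Zalloum that the present paper simply imports.

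On the substance of your sketch: the ingredients (Rafi's distance formula, hierarchy paths, the Morse lemma in $\calC(S)$ for the top term, and a passing-up count to control how many proper subsurfaces can contribute) are the right ones, and indeed this is roughly the architecture of the argument in \cite{durham2022geometry}. But there is a genuine localization issue you gloss over. Your term-by-term and counting bounds are all in terms of $\kappa(b)$, the value at the \emph{far endpoint} of the quasigeodesic segment, whereas the $\kappa^{2p}$-Morse condition demands a bound by $\kappa^{2p}(d_{\Teich}(\mathfrak{o},x))$. If $x$ sits near $\beta(a)$ with $a$ small and $b$ enormous, $\kappa(b)$ tells you nothing about $\kappa(d_{\Teich}(\mathfrak{o},x))$, and your ``standard bootstrap'' paragraph does not address this direction of the inequality. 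The actual argument has to localize: one restricts attention to subsurfaces whose active intervals (in the sense of \cite[Theorem~5.3]{rafi2014hyperbolicity}) actually meet the portion of $\beta$ near $\tau$, so that the relevant $\kappa$-values are $\kappa(\tau)$ rather than $\kappa(b)$, and only then runs the passing-up count. You allude to this in your final paragraph but treat it as bookkeeping; it is in fact the crux that makes the exponent come out independent of the segment and yields a genuine sublinear neighborhood.
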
\qed



The following completes the proof of Theorem \ref{thm:main}:

\begin{thm}
    There exists $p = p(S)$ so that the Teichm\"uller geodesic $\gamma$ associated with $X$ is $\log^{2p}$-Morse.
\end{thm}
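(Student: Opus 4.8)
The plan is to combine Theorem \ref{thm:k-bdd} with the curve-graph divergence estimates from Section \ref{sec:curve graph}. By Theorem \ref{thm:k-bdd}, it suffices to exhibit a sublinear function $\kappa$ such that $\kappa^{2p}$ is sublinear (for the $p = p(S)$ from \cite{durham2022geometry}) and such that $\gamma$ has $\kappa$-bounded projections; we will take $\kappa = \log$, whose powers are all sublinear, so the only real work is to verify that $\gamma$ satisfies $\log$-bounded projections, i.e.\ that $d_Y(\gamma(0), \gamma(t)) \leq C \log t$ for every proper subsurface $Y \subsetneq S$ and a uniform $C = C(S)$.

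The key step is to translate the linear-in-index spreading of the slit curves in $\calC(S)$ (Lemma \ref{lem:spacing}) into a logarithmic-in-time bound on all subsurface projections. First I would fix a proper subsurface $Y$ with $d_Y(\gamma(0),\gamma(t))$ large; by Rafi's active interval machinery \cite{rafi2005characterization, rafi2014hyperbolicity} (as already invoked in the proof of Proposition \ref{Prop2}), a large projection to $\calC(Y)$ forces $\partial Y$ to become short along $\gamma$, with the projection accumulating only during the active interval $I_Y$, which maps under $\pi_S$ to a bounded-diameter neighborhood of a single point $v_Y \in \Gamma$ on the curve-graph geodesic ray. Now the point $v_Y$ lies in some $\Gamma_k$, and by items (3)--(5) of Lemma \ref{lem:spacing} the $\Gamma_k$ cover $\Gamma$ with multiplicity at most $3$ and each has diameter $> L/2$; hence the index $k$ with $v_Y \in \Gamma_k$ satisfies $k \asymp d_{\calC(S)}(\Gamma(0), v_Y)$, i.e.\ $k$ is comparable to the curve-graph distance traveled. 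Meanwhile item (1) of Lemma \ref{lem:spacing} gives $t_k \leq t_0 + C\sum_{j<k}\log j = O(k\log k)$, so $k = \Omega(t_k/\log t_k)$; equivalently, once $\gamma$ has run to parameter time $t$, it has passed at most $O(t)$ worth of index, so the active interval $I_Y$ — if it is contained in $[0,t]$ — has $k \leq k(t)$ for $k(t)$ the largest index with $t_{k(t)} \leq t$. The upshot is that any single subsurface projection is bounded by a uniform constant (the active-interval bound, which is $O(1)$ for a fixed threshold), but what we need is the bound at a fixed subsurface as a function of $t$, which is automatically $O(1)$, hence certainly $O(\log t)$; the subtlety is rather that the constant must be uniform over all $Y$, which follows because the active-interval diameter bound from \cite{rafi2014hyperbolicity} is uniform in $S$.

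Let me restate the argument more carefully, since the genuinely quantitative input is the \emph{number} of subsurfaces with large projection rather than the size of any one of them. Given $t$, the subsurfaces $Y$ with $d_Y(\gamma(0),\gamma(t)) > L_0$ (for the Modami--Rafi threshold $L_0$) each have an active subinterval of $[0,t]$, and by the Behrstock inequality / bounded geodesic image theorem the corresponding points $v_Y$ on $\Gamma$ are $\Omega(1)$-separated and all lie within $d_{\calC(S)}(\Gamma(0), \Gamma(\text{time } t)) = O(t / \log t \cdot \log t) = O(t)$— wait, more simply: by Lemma \ref{lem:spacing}(1) the curve-graph point $\Gamma$-parameter reached by time $t$ is $O(t)$, so there are $O(t)$ such $Y$. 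This controls the \emph{total} projection distance but for $\kappa$-bounded projections we only need each individual one bounded, which is the $O(1)$ active-interval bound. So the proof is: (i) cite Theorem \ref{thm:k-bdd} to reduce to $\log$-bounded projections; (ii) cite \cite[Theorem 5.3, Theorem 6.1]{rafi2014hyperbolicity} to get a uniform $C = C(S)$ with $d_Y(\gamma(0),\gamma(t)) \leq C$ for every proper $Y$ and every $t$ — in fact Teichmüller geodesics with non-uniquely ergodic vertical foliation still have \emph{finite} (though a priori unbounded) subsurface projections, so the real content is that here they are \emph{uniformly} bounded, which is exactly what Lemma \ref{lem:spacing}(5) buys: the bounded-overlap of the intervals $\Gamma_k$ prevents any subsurface from accumulating projection across more than $O(1)$ consecutive blocks, and within each block the total projection to any fixed $Y$ is $O(1)$ by Rafi's characterization; (iii) conclude $d_Y(\gamma(0),\gamma(t)) \leq C \leq C\log t$ for $t \geq e$, absorbing the finitely many small $t$ into the constant, so $\gamma$ has $\log$-bounded projections and Theorem \ref{thm:k-bdd} gives that $\gamma$ is $\log^{2p}$-Morse.

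The main obstacle I anticipate is establishing the \emph{uniform} bound in step (ii): a priori, even a single proper subsurface $Y$ could see $\gamma$ project an arbitrarily large (but finite) distance if $\partial Y$ gets extremely short, and for a generic non-uniquely ergodic ray this is precisely what goes wrong (it is why such rays are typically not sublinearly Morse). The resolution must use the specific structure of our $\gamma$: the slit curves $\zeta_k$ are the \emph{only} curves getting arbitrarily short (by Proposition \ref{Prop1}(2), the complementary tori stay $\epsilon$-thick), so the only subsurfaces with unboundedly large projections are (annular neighborhoods of) the $\zeta_k$ and subsurfaces disjoint from them — and for those, the active intervals are the $[t_k - O(\log t_k), t_k + O(\log t_k)]$-type windows, whose \emph{lengths} are $O(\log t_k) = O(\log t)$, so the projection accumulated is $O(\log t)$ rather than $O(1)$; this is the one place where the logarithmic (rather than bounded) gap in Proposition \ref{Prop1}(1) genuinely enters, and it is exactly matched to the $\log$-Morse conclusion. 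Nailing down which subsurfaces have $\log$-sized versus $O(1)$-sized active windows, and verifying the accumulated projection over a $\log$-length window is itself $O(\log t)$, is the crux; everything else is citation bookkeeping.
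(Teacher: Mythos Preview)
Your overall strategy---reduce to $\log$-bounded projections via Theorem~\ref{thm:k-bdd}, then use Rafi's active-interval machinery together with Lemma~\ref{lem:spacing} to localize $I_Y$ to a few consecutive blocks $[t_k,t_{k+1}]$---is the same as the paper's. But the proposal is internally inconsistent, and the version you present as the actual argument is wrong. In your step~(ii) you assert a \emph{uniform} bound $d_Y(\gamma(0),\gamma(t))\le C$ for all proper $Y$. That cannot hold: it would in particular bound all annular projections, forcing $\gamma$ to stay in a thick part, hence be recurrent, hence have uniquely ergodic vertical foliation by Masur's criterion---contradicting Lemma~\ref{LemmaNUE}. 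You flag exactly this in your final paragraph, so you already know step~(ii) is false; the correct argument is the one you sketch there, not the one you wrote as the proof.

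What is missing from your final paragraph is the inequality that turns ``active window of Teichm\"uller length $O(\log t)$'' into ``projection $O(\log t)$''. The paper does this with the distance formula: once $I_Y\subset[t_{k-1},t_{k+1}]$,
\[
d_Y(\gamma(0),\gamma(t))\ \asymp\ d_Y(\gamma(t_{k-1}),\gamma(t_{k+1}))\ \prec\ \sum_W d_W(\gamma(t_{k-1}),\gamma(t_{k+1}))\ \asymp\ d_{\Teich(S)}(\gamma(t_{k-1}),\gamma(t_{k+1}))\ \prec\ \log k\ \le\ \log t.
\]
With this chain in hand there is no need for your proposed case split between ``dangerous'' subsurfaces (slit annuli and their complements) and the rest: the Bounded Geodesic Image theorem together with items~(3)--(5) of Lemma~\ref{lem:spacing} forces $I_Y$ into two consecutive blocks for \emph{every} proper $Y$ uniformly, and the distance-formula bound then applies across the board.
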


\begin{proof}
    By Theorem \ref{thm:k-bdd}, it suffices to prove that $\gamma$ has $\log$-bounded projections.  Fix a subsurface $Y \subset S$.

    As in Lemma \ref{lem:spacing} above, let $\Gamma$ be a geodesic representative in $\mathcal{C}(S)$ of the quasi-geodesic ray $\pi_S(\gamma)$.  By the Bounded Geodesic Image Theorem \cite[Theorem 3.1]{MM_II}, if $\partial Y$ is not uniformly close to $\Gamma$, then $\gamma$ has a uniformly bounded projection to $\mathcal{C}(Y)$.

    On the other hand, the closest point projection $P_Y$ of $\partial Y$ to $\Gamma$ has uniformly bounded diameter.  By increasing the constant $L$ in  Lemma \ref{lem:spacing} a bounded amount as necessary (which Proposition \ref{Prop2} allows us to do), we can guarantee that $P_Y$ overlaps at most two consecutive $\Gamma_k$, and hence that $\partial Y$ is as far as we would like from the restrictions $\gamma_k = \pi_S(\gamma|_{[t_k, t_{k+1}]})$ for all but two consecutive $k$.

    In particular, this forces the active interval $I_Y$ to overlap at most two consecutive $[t_k, t_{k+1}]$, while on the other hand the projection to $\mathcal{C}(Y)$ is coarsely constant outside of the active interval $I_Y$ for $Y$ by \cite[Theorem 6.1]{rafi2014hyperbolicity}.

    Supposing that $I_Y$ is contained in the union of $[t_{k-1}, t_k] \cup [t_k, t_{k+1}]$, then the distance formula for $\Teich(S)$ \cite{rafi2007combinatorial, durham2016augmented} and \cite[Theorem 6.1]{rafi2014hyperbolicity} provide that:
    \begin{eqnarray}
    d_Y(\gamma(0), \gamma(t)) &\asymp& d_Y(\gamma(t_{k-1}), \gamma(t))\\
    &\prec& d_Y(\gamma(t_{k-1}), \gamma(t_{k+1}))\\ &\prec& \sum_{W\subset \mathcal L_k} d_W(\gamma(t_{k-1}), \gamma(t_{k+1}))\\
    &\asymp& d_{\Teich(S)}(\gamma(t_{k-1}), \gamma(t_{k+1}))\\
    &\prec& 2 \log k\\
    &\prec& 2\log t_k\\
    &\leq& 2 \log t.
    \end{eqnarray}
    In these computations:
    \begin{itemize}
        \item The coarse (in)equalities $\asymp$ and $\prec$ are (in)equalities which hold up to bounded additive and multiplicative constants depending only on the topology of $S$.

        \item Given an annulus $V \subset S$, the distance $d_V(-,-)$ denotes distance in a combinatorial horoball over the annular curve graph for $V$ \cite{durham2016augmented} or alternatively an appropriate horoball in $\mathbb H^2$ \cite{dowdall2014statistical}.

       \item In line (3),
       \[\mathcal L_k = \{W \subset S| d_W(\gamma(t_{k-1}), \gamma(t_{k+1})>M\}\]
       denotes the subsurfaces where $\gamma(t_{k-1}), \gamma(t_{k+1})$ have a projection larger than $M = M(S)$, a constant depending only on the topology of $S$.

        \item The transition from line (3) to (4) follows from the distance formula for the Teichm\"uller metric \cite{rafi2007combinatorial} (see \cite[Theorem 7.14]{durham2016augmented} or \cite[Proposition A.1]{dowdall2014statistical} for the version used here).

        \item Finally, the penultimate inequality follows from item (1) of Lemma \ref{lem:spacing}, where we replace $t_k$ with $t_{k-1}$ in line (6) depending (respectively) on whether $t \geq t_k$ or $t<t_k$.
    \end{itemize}

    In particular, $\gamma$ as $\log$-bounded projections, as required.
\end{proof}

\bibliography{refs}
\bibliographystyle{alpha}

\end{document}